\newtheorem{theorem}{Theorem}[section]
\newtheorem{proposition}[theorem]{Proposition}
\newtheorem{lemma}[theorem]{Lemma}
\theoremstyle{definition}
\newtheorem{definition}[theorem]{Definition}
\numberwithin{equation}{section}
\newcommand{\R}{{\mathbb R}}
\newcommand{\tr}{\mathrm{tr}^*}
\newcommand{\chr}[1]{\mathbf{1}\ind{#1}}
\newcommand{\BSA}{\begin{subarray}}
	\newcommand{\ESA}{\end{subarray}}
\newcommand{\BAL}{\begin{aligned}}
	\newcommand{\EAL}{\end{aligned}}
\newcommand{\note}[1]{\noindent\textit{#1.}\hspace{2mm}}
\newcommand{\Remark}{\note{Remark}}
\newcommand{\forevery}{\quad \forall}
\newcommand{\norm}[1]{\left \|#1\right \|}
\newcommand{\rec}[1]{\frac{1}{#1}}
\newcommand{\supp}{\mathrm{supp}\,}
\newcommand{\dist}{\mathrm{dist}\,}
\newcommand{\sign}{\mathrm{sign}\,}
\newcommand{\prt}{\partial}
\newcommand{\ti}{\times}
\newcommand{\tl}{\tilde}
\newcommand{\sbs}{\subset}
\newcommand{\ind}[1]{_{_{#1}}}
\newcommand{\wrto}{with respect to\xspace}
\newcommand{\sth}{such that\xspace}
\newcommand{\bvp}{boundary value problem\xspace}
\newcommand{\bdw}{\partial\Gw}
\newcommand{\qtxt}[1]{\quad\textrm{#1}}
\newcommand{\ntxt}[1]{\noindent\textit{#1}.}
\def\ga{\alpha}     \def\gb{\beta}       \def\gg{\gamma}
       \def\gd{\delta}      \def\ge{\epsilon}
\def\gth{\theta}                         
            \def\gl{\lambda}
\def\gs{\sigma}       
      \def\gw{\omega}
\def\gx{\xi}                \def\gz{\zeta}
     \def\Gd{\Delta}      
    \def\Gs{\Sigma}      
\def\Gw{\Omega}              
\def\BBG {\mathbb G}       
   \def\BBK {\mathbb K}    
   \def\BBR {\mathbb R}
\def\GTM {\mathfrak M}
\def\btr{boundary trace\xspace}
\def\tr{\mathrm{tr\,}}
\def\LVsup{$L_V$ superharmonic\xspace}
\def\LVsub{$L_V$ subharmonic\xspace}
\def\LVhr{$L_V$ harmonic\xspace}
\def\q{\quad}
\def\GPV{\Gw;\Phi_V}
\def\GPV0{\Gw,\frac{\Phi_V}{Phi_0}}
\begin{document}
	
	\title[Semilinear Schr\"odinger equations]{Boundary value problems for semilinear Schr\"odinger equations with singular potentials and measure data}
	\author[M. Bhakta]{Mousomi Bhakta}
\address[M. Bhakta]{Department of Mathematics, Indian Institute of Science Education and Research, Pune 411008, India}
\email[M. Bhakta]{mousomi@iiserpune.ac.in}
\author[M. Marcus]{Moshe Marcus} 
\address[M. Marcus, Corresponding author]{Department of Mathematics, Technion\\
 Haifa 32000, Israel}
 \email{marcusm@math.technion.ac.il}
 \author[P. T. Nguyen]{ Phuoc-Tai Nguyen}
\address[P. T. Nguyen]{Department of Mathematics and Statistics, Masaryk University, Brno, Czech Republic}
\email[P. T. Nguyen]{ptnguyen@math.muni.cz}


	\begin{abstract}
	We study boundary value problems with measure data in smooth bounded domains $\Gw$,  for semilinear equations. 
	Specifically we consider problems of the form
	$-L_Vu + f(u) = \tau$ in $\Gw$ and $\tr_V u=\nu$ on $\bdw$, where $L_V= \Gd+V$, $f\in C(\BBR)$ is monotone increasing with $f(0)=0$ and $\tr_V u$ denotes the measure boundary trace of $u$ associated with $L_V$. 
	The potential $V$ is typically a H\"older continuous function in $\Gw$  that blows up at a set $F\sbs \bdw$ as $\dist(x,F)^{-2}$. In general the above \bvp may not have a solution. We are interested in questions related to the concept of `reduced measures', introduced in \cite{BMP} for $V=0$.
	Our results extend results of \cite{BMP} and \cite{BP} and apply to a larger class of nonlinear terms
	$f$. In the case of signed measures, some of the present results are new even for $V=0$. 
	
	\vskip 3mm
	
	\noindent Keywords: reduced measures, boundary trace, harmonic measures, Kato's inequality.
	
	\vskip 2mm
	
	\noindent MSC numbers: 35J61, 35J75, 35J10

	\end{abstract}
	
	
	\maketitle
	
	\tableofcontents
	
	\section{Introduction}
Let $\Gw$ be a $C^2$ bounded domain in $\BBR^N$, $N\geq 3$, and let 
$$L_V:=\Gd+V$$ where $V \in C^\gth(\Gw)$, for some $\gth\in (0,1]$, satisfies the following conditions:
\begin{equation} \label{A1} \tag{A1} \exists\, \bar a>0\, :\quad |V(x)| \leq \bar a \gd(x)^{-2} \forevery\, x\in \Gw, 
\end{equation}
$$ \gd(x):=\dist(x,\bdw),$$
\begin{equation} \label{A2} \tag{A2}
	\int_\Gw |\nabla\phi|^2\,dx\geq \int_\Gw \phi^2 V\, dx  \forevery\, \phi\in H^1_0(\Gw). 
\end{equation}
These conditions imply the existence of a (minimal) Green function $G_V$ and of the Martin kernel $K_V$ for the operator $-L_V$. Related to this, the operator has a ground state that we denote by $\Phi_V$. In the present case $\Phi_V$ is a positive eigenfunction of $-L_V$ with eigenvalue $\gl_V>0$.

The function $\Phi_V$ and the Martin kernel $K_V$ are normalized at a reference point $x_0\in\Gw$: 
$$\Phi_V(x_0)=1,\q K_V(x_0,y)=1 \forevery y\in \bdw.$$

\ntxt{Notation} Denote
\begin{align*}
	\BBK_V[\nu](x):=&\int_{\bdw} K_V(x, y)d\nu(y) \forevery \nu\in \GTM(\bdw),\\
	\BBG_V[\tau](x):= & \int_\Gw G_V(x, y)d\tau(y) \forevery \tau\in \GTM(\Gw;\Phi_V).
\end{align*}
Here $\GTM(\bdw)$ denotes the space of finite Borel measures on $\bdw$ and $\GTM(\Gw;\Phi_V)$
denotes the space of real Borel measures $\tau$ in $\Gw$ \sth $\int_\Gw \Phi_V\,d|\tau|<\infty$. As usual $\GTM_+(\partial \Omega)$ and $\GTM_+(\Omega;\Phi_V)$ denote the positive cones of these spaces. 

A function $u \in L^1_{\rm loc}(\Omega)$ is $L_V$ harmonic (resp. subharmonic, superharmonic) in $\Omega$ if $-L_V u = (\text{resp. } \leq, \, \geq ) \, 0$ in $\Omega$ in the distribution sense.

By the Martin representation theorem, for every positive $L_V$ harmonic function $u$ in $\Gw$ there exists $\nu\in \GTM_+(\bdw)$ \sth $ u=\BBK_V[\nu]$.

By the Riesz decomposition lemma, a positive \LVsup $u$ can be represented in the form
$u=p+h$ where $h$ is the largest \LVhr function dominated by $u$ and $p$ is an $L_V$ potential, i.e. a positive \LVsup function which does not dominate any positive \LVhr.

A function $u$ is an $L_V$ potential if and only if there exists a positive measure $\tau \in \GTM(\Gw;\Phi_V)$ \sth $u=\BBG_V[\tau]$. If $\tau$ is a positive Radon measure then either $\BBG_V[\tau]$ is finite everywhere in $\Gw$ or $\BBG_V[\tau]\equiv \infty$. Moreover, $\BBG_V[\tau]<\infty$ if and only if $\tau\in \GTM(\Gw;\Phi_V)$.

For these and other basic potential theory results we refer the reader to \cite{An88}. A brief survey can  be found in \cite{MM-Green}.
\vskip 2mm

In this paper we study \bvp{s} of the form
\begin{equation}\label{bvp-gen} 
	-L_Vu+f(u)=\tau \qtxt{in }\Gw, \quad 
	\tr_V u=\nu \qtxt{on }\bdw,
\end{equation}
always assuming that 
\begin{equation}\label{f-cond}
	f\in C(\BBR),\qtxt{$f$ is non-decreasing}, \q f(0)=0
\end{equation}
and
$$
	\nu\in \GTM(\bdw), \q \tau\in \GTM(\Gw;\Phi_V).
$$
Finally $\tr_V u$, the $L_V$ boundary trace of $u$, is defined as follows. 

\begin{definition}\label{hmtr}  A non-negative Borel function $u$ defined in $\Gw$ has an $L_V$ \emph{boundary trace} $\nu\in \GTM(\bdw)$ if
	\begin{equation}\label{LVtrace}
		\lim_{n\to\infty}\int_{\prt D_n}hu \,d\gw_V^{x_0,D_n}= \int_{\bdw} hd\nu \forevery\, h\in C(\bar\Gw),
	\end{equation}
	for every uniformly Lipschitz exhaustion $\{D_n\}$ of $\Gw$  \sth $x_0\in D_n$ for all $n$. Here $x_0$ is the reference point previously mentioned and $\gw_V^{x_0,D_n}$ denotes the harmonic measure for $L_V$ in $D_n$ relative to $x_0$. The $L_V$ boundary trace of $u$ is denoted by $\tr_V u$.
	
A real Borel function $u$ defined in $\Gw$ has an $L_V$ boundary trace if
\begin{equation}\label{int|u|}
 \sup_{n}\int_{\partial D_n} |u|\,d\gw_V^{x_0,D_n} < + \infty	
\end{equation} 
and \eqref{LVtrace} holds.	
\end{definition}
\vskip 1mm

When $V=0$ this definition reduces to the classical definition of measure boundary trace.
Recall that 
$$
	d\gw_n^{x_0}=P_{V,n}(x_0, \cdot)dS \qtxt{on }\prt D_n,
$$
where $P_{V,n}$ is the Poisson kernel of $-L_V$ in $D_n$. 

By \cite[Lemma 2.3]{MM-note}, if \eqref{A1}, \eqref{A2} hold,  the $L_V$ trace has the following properties:
\begin{equation}\label{tr-prop'}
	\BAL
	(i)\q \tr_V(\BBK_V[\nu])&=\nu \forevery \nu\in \GTM(\bdw),\\
	(ii)\q \tr_V(\BBG_V[\tau])&=0 \forevery \tau\in \GTM(\Gw;\Phi_V).
	\EAL
\end{equation}
\vskip  1mm
\noindent\textit{Notation.}\hskip 2mm  (i) Let $(\gl,\gs)$ and $(\tau,\nu)$ be  two couples of measures in $\GTM(\Gw;\Phi_V)\ti \GTM(\bdw)$. Then 
$(\gl,\gs) \prec (\tau,\nu)$ means $\gl\leq\tau$ and $\gs\leq \nu$.

(ii) For $\beta>0$, denote
\begin{equation} \label{Omegab} \begin{aligned} D_\beta:=&\{x \in \Omega: \delta(x)>\beta\}, \quad \Omega_\beta:=\{x \in \Omega: \delta(x)<\beta \}, \\ 
&\hskip 1cm \Sigma_\beta:=\{ x \in \Omega: \delta(x)=\beta\}.		
\end{aligned} \end{equation}

Since $\Omega$ is a $C^2$ bounded domain, there exists $\beta_0>0$ such that for every $x \in \Omega_{\beta_0}$ there is a unique point $\sigma(x) \in \partial \Omega$ such that $|x-\sigma(x)|=\delta(x)$, and $x \mapsto \delta(x)$ is in $C^2(\Omega_{\beta_0})$ while $x \mapsto \sigma(x)$ is in $C^1(\Omega_{\beta_0})$.
\vskip 2mm

In addition to \eqref{A1} and \eqref{A2}, we assume that the ground state $\Phi_V$ satisfies the following condition: 

There exist $a_0\geq 1$ and $\ga,\ga^*>0$ satisfying
$$
0 \leq \alpha - \alpha^* 	< \frac{1}{2},
$$
\sth for every $a>a_0$ and every $x,z \in \Omega_{\beta_0}$ lying on a normal to $\partial \Omega$:
\begin{equation} \tag{C1} \label{C1}
	a\delta(x) \leq \delta(z) \Longrightarrow \frac{\Phi_V(x)}{\Phi_V(z)} \leq c(a)\frac{\delta(x)^{\alpha^*}}{\delta(z)^\alpha}.
\end{equation}

Conditions \eqref{A1}, \eqref{A2} and \eqref{C1} are assumed, without further mention,  throughout the paper.

\begin{definition}\label{d:sol} 
	Let $(\tau, \nu)\in \GTM(\Gw;\Phi_V) \ti \GTM(\bdw)$ and $u\in L^1_{\rm loc}(\Gw)$.
		
(i) $u$ is a solution of  \eqref{bvp-gen} if  $f(u)\in L^1(\Gw;\Phi_V)$, the equation holds in the distribution sense and $\tr_V u=\nu$.

(ii) $u$ is a subsolution of  \eqref{bvp-gen} if  $f(u)\in L^1(\Gw;\Phi_V)$, $-L_Vu +f(u)\leq \tau$ in the distribution sense and $\tr_V u \leq \nu$.

A supersolution is defined in the same way with the inverse inequalities.
\end{definition}

With this definition, $u$ is a solution of \eqref{bvp-gen} if and only if (see \cite[Lemma 3.1]{MM-note})
\begin{equation}\label{bvp-GK}
u+\BBG_V[f(u)]=\BBG_V[\tau] +\BBK_V[\nu] \quad \text{in } \Omega.
\end{equation}

 If \eqref{bvp-gen} has a solution we say that $(\tau,\nu)$ is a \emph{good couple}.

If $(0,\nu)$ (respectively $(\tau,0)$) is a good couple we say that $\nu$ (respectively $\tau$) is a \emph{good measure}.

We are interested in questions related to the notion of `reduced measure' introduced in \cite{BMP} (for $L_V=\Gd$). In general, problem \eqref{bvp-gen} is not solvable for every couple $(\tau,\nu)$. 

A great deal of research has been devoted to a precise characterization of good measures or good couples in some specific cases. Most of this research dealt with the equation $-\Gd u +f(u)=0$ in $\Omega$ and in particular with the case $f(t)=|t|^p\sign t$, $p>1$ (see  \cite{Dybook2,LG95,LG99,Ms2004} for $1<p\leq 2$ and \cite{MV1, MV2, MM} for every $p>1$ and the
references therein). 
See also \cite{An-Mar} where the problem was treated for a general class of nonlinearities $f$ that satisfy the Keller - Osserman condition.

More recently the characterization of good measures was studied \wrto the equation $-L_Vu +f(u)=0$ in $\Omega$, mainly when $V$ is the Hardy potential and $f(t)=|t|^p\sign t$  (see, e.g. \cite{MN1,MN2,M+Moroz,Gki-Ver,CV}).  The question was also studied in the context of fractional Schr\"odinger equations  (see, e.g. \cite{Go-Vaz}).

The idea of `reduced measure', introduced in \cite{BMP},
is to provide a reduction process that converges to the  `good' part of $\tau$ and $\nu$  when \eqref{bvp-gen} has no solution.\footnote{A related notion of 'reduced limit' was studied in \cite{MP}, \cite{M+Bhakta}.} 

In \cite{BMP}, the authors study problem \eqref{bvp-gen} for $L_V=\Gd$, mainly in the case where $\nu=0$, $\tau\in \GTM(\Gw)$, assuming that $f$ satisfies \eqref{f-cond} and vanishes on $(-\infty, 0]$. A solution is a function $u\in L^1(\Gw)$ \sth $f(u)\in L^1(\Gw)$ and $u$ satisfies \eqref{bvp-gen} in the weak sense. To determine the reduced measure the authors consider a sequence of problems
\begin{equation}\label{bvp-gdn}
	-\Gd u + f_n(u)=\tau \qtxt{in }\Gw, \q u=0 \qtxt{on }\bdw,
\end{equation}
where $f_n\in C(\BBR)$ is a non-negative, nondecreasing function, $f_n\uparrow f$ and \eqref{bvp-gdn} has a solution for every $\tau\in \GTM(\Gw)$. (For instance, the functions $f_n$ are bounded.) One of the main results 
states (see \cite[Theorem 4.1]{BMP}):

Let  $u_n$ be the unique solution of \eqref{bvp-gdn}. Then the sequence $\{u_n\}$
decreases and  $u^*:=\lim u_n$ satisfies
\begin{equation*}
	-\Gd u^* + f(u^*)=\tau^* \qtxt{in }\Gw, \q u^*=0 \qtxt{on }\bdw,
\end{equation*}
where $\tau^*$ is the largest good measure dominated by $\tau$. 

In \cite{BP} a similar result is established in the case $(\tau,\nu)\in \GTM(\Gw)\ti \GTM(\bdw)$ (possibly signed measures) assuming as before that $f=0$ on $(-\infty,0]$.
It is also shown that $(\tau,\nu)$ is good if and only if, both
$\tau$ and $\nu$ are good measures.

When $\tau, \nu$ are signed measures and we drop the assumption `$f=0$ on $(-\infty,0)$', the situation is more complex even in the case $V=0$.

The present definition of a solution of \eqref{bvp-gen} is necessarily different from that of weak solution used in \cite{BMP}, \cite{BP}. But when $V=0$ these are essentially equivalent.

In the case where $\tau$ and $\nu$ are positive our results are similar to those quoted above. 

Let $u_n$ denote the solution of the problem,
\begin{equation}\label{bvp-gfn}
	-L_V u + f_n(u)=\tau \qtxt{in }\Gw, \q \tr_V u=\nu \qtxt{on }\bdw 
\end{equation}
where $f_n\in C(\BBR)$ is non-decreasing, bounded and $(f_n)_\pm\uparrow f_\pm$.

\begin{theorem} \label{th:main1}
	Assume that $f$ satisfies \eqref{f-cond} and 
	$(\tau,\nu)\in \GTM_+(\Gw;\Phi_V)\ti \GTM_+(\bdw)$. For $u_n$ as above: $u_n\downarrow u^\#$ and
	\begin{equation}\label{bvp-pos}
-L_V u^\# + f(u^\#) = \tau^\# \quad \text{in } \Omega, \q \tr_Vu^\#=\nu^\#	
\end{equation}
where $(0,0) \prec (\tau^\#,\nu^\#) \prec (\tau,\nu)$.
Moreover, $u^\#$ is the largest subsolution of \eqref{bvp-gen} and  $\tau^\#$, $\nu^\#$ are the largest good measures dominated by $\tau$ and $\nu$ respectively.

\end{theorem}

A corresponding result holds for couples of negative measures (see Remark \ref{neg-m}).

In the case that $\tau, \nu$ may be signed measures we prove:


\begin{theorem}\label{th:main2}
	Assume that $f$ satisfies \eqref{f-cond} and
$(\tau,\nu)\in \GTM(\Gw;\Phi_V)\ti \GTM(\bdw)$. Let $(\gl, \gs)$ be a couple of measures \sth 
	\begin{equation}\label{interval1}
-(\tau_-, \nu_-)\prec (\gl,\gs)\prec (\tau_+,\nu_+)
\end{equation}
and let $u_n$ be the solution of \eqref{bvp-gfn} with $(\tau,\nu)$ replaced by $(\gl,\gs)$. 

Every subsequence of $\{u_n\}$ has a limit point \wrto a.e. convergence.
   If $\tl u$ is such a limit point then 
\begin{equation}\label{bvp_tl}
-L_V \tl u + f(\tl u) = \tl \gl \qtxt{in }\;\Gw, \q 
\tr_V\tl u=\tl \gs 
\end{equation}
and
	\begin{equation}
	(-\tau_-, -\nu_-)^\# \prec (\tl\gl,\tl\gs) \prec (\tau_+,\nu_+)^\#.
\end{equation} 

Moreover, every couple $(\gl,\gs)$ \sth 
\begin{equation}\label{good_interval}
(-\tau_-, -\nu_-)^\# \prec (\gl,\gs) \prec (\tau_+,\nu_+)^\#
\end{equation}
is a good couple.
\end{theorem}

This naturally leads to the following  question: 
\emph{If $(\gl,\gs)$ is a good couple in the interval \eqref{interval1} 
	does it necessarily satisfy \eqref{good_interval}?}

As shown below, if $f$ vanishes in $(-\infty,0]$, the answer is positive. In the general case this is an open question.

\begin{theorem} \label{th:main3}
	Assume that $f$ satisfies \eqref{f-cond} and that $f(t)=0$ for $t\leq 0$.
	Let $(\tau,\nu)\in \GTM(\Gw;\Phi_V)\ti \GTM(\bdw)$. Then $(\tau,\nu)$ is a good couple if and only if
	\begin{equation}\label{good_couple}
		(\tau,\nu) \prec (\tau_+^\#,\nu_+^\#)
	\end{equation}
	where $\tau_+^\#$ and $\nu_+^\#$ are the largest good measures dominated by $\tau_+$ and $\nu_+$ respectively.\footnote{Note that, when $f=0$ on $(-\infty,0]$, the couple $(-\tau_-,- \nu_-)$ is always good.}  
	
	Consequently, $(\tau,\nu)$ is a good couple with respect to \eqref{bvp-gen} if and only if $\tau$ and $\nu$ separately are good measures.
\end{theorem}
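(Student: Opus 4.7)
The plan splits into sufficiency, necessity via a supersolution obtained from Kato's inequality, the promotion of that supersolution into an actual solution, and the final consequence.

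\emph{Sufficiency.} The assumption $f(t)=0$ on $(-\infty,0]$ forces $\hat f(t):=-f(-t)=0$ on $[0,\infty)$. Consequently, for every pair of positive measures $(\alpha,\beta)\in \GTM(\Gw;\Phi_V)\ti \GTM(\bdw)$ the non-negative function $\BBG_V[\alpha]+\BBK_V[\beta]$ is a solution of $-L_V w+\hat f(w)=\alpha$, $\tr^* w=\beta$, so $(\alpha,\beta)$ is a good couple for $\hat f$; in particular $(\tau_-,\nu_-)^\#_{\hat f}=(\tau_-,\nu_-)$. Since $-(\tau_-,\nu_-)\prec(\tau,\nu)$ is automatic (it merely expresses $\tau_+,\nu_+\geq 0$), the hypothesis $(\tau,\nu)\prec(\tau_+,\nu_+)^\#$ places $(\tau,\nu)$ inside the good interval of Theorem \ref{th:main2}, so $(\tau,\nu)$ is good.

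\emph{Necessity: a supersolution via Kato.} Let $u$ solve \eqref{bvp-gen} for $(\tau,\nu)$. Since $f$ vanishes on $(-\infty,0]$, $f(u)=f(u_+)$. Define $V':=\BBG_V[\tau_-]+\BBK_V[\nu_-]\geq 0$ and $u':=u+V'$; then
$$-L_V u'+f(u_+)=\tau_+,\qquad \tr^* u'=\nu_+.$$
Kato's inequality applied to $-u$, combined with $L_V u=f(u_+)-\tau$ and $u_+\equiv 0$ on $\{u<0\}$, yields $-L_V u_-\leq \tau_-$; the corresponding boundary statement for the nbtr gives $\tr^* u_-\leq\nu_-$. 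A maximum-principle comparison of $u_-$ with the linear solution $V'$ (which satisfies $-L_V V'=\tau_-$, $\tr^* V'=\nu_-$) then yields $u_-\leq V'$, so $u'=u_++(V'-u_-)\geq u_+\geq 0$. By monotonicity of $f$, $f(u')\geq f(u_+)$, so $u'$ is a non-negative supersolution of
$$-L_V w+f(w)=\tau_+,\qquad \tr^* w=\nu_+,$$
while Kato applied to $u$ itself gives $-L_V u_++f(u_+)\leq\tau_+$, $\tr^* u_+\leq\nu_+$, i.e., $u_+$ is a subsolution.

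\emph{Promotion and consequence.} A sub/supersolution argument (for instance, Picard iteration starting from $u'$, producing a decreasing sequence bounded below by $u_+$ whose $f$-values are controlled in $L^1(\Gw;\Phi_V)$ by testing against $\Phi_V$ and using $L_V\Phi_V=-\gl_V\Phi_V$) produces a solution of $-L_V w+f(w)=\tau_+$, $\tr^* w=\nu_+$. Hence $(\tau_+,\nu_+)$ is itself a good couple, so $\tau_+^\#=\tau_+$ and $\nu_+^\#=\nu_+$; together with the trivial inequalities $\tau\leq \tau_+$ and $\nu\leq \nu_+$ this gives $(\tau,\nu)\prec(\tau_+,\nu_+)^\#$. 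For the \emph{consequently} clause, apply the main equivalence to $(\tau,0)$ and $(0,\nu)$ separately: since $\tau_+\perp \tau_-$ and $\tau_+^\#\leq\tau_+$, the inequality $\tau\leq \tau_+^\#$ is equivalent to $\tau_+=\tau_+^\#$, so $(\tau,0)$ is good iff $\tau$ is a good measure, and similarly for $\nu$. Thus $(\tau,\nu)$ is good iff both $\tau$ and $\nu$ are good.

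The step I expect to be the main technical obstacle is the boundary Kato inequality $\tr^* u_-\leq \nu_-$: the nbtr is defined via the integral limit \eqref{ntr1} rather than by pointwise restriction, so this inequality must be argued through Proposition \ref{tr-equiv}, reducing to the $L_V$-boundary trace framework, where the negative part of a function whose $L_V$-action is a measure can be controlled by the corresponding part of its trace.
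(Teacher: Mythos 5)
Your sufficiency argument is fine and coincides with the paper's: since $\hat f$ vanishes on $[0,\infty)$, every couple of positive measures is good for $\hat f$, so $-(\tau_-,\nu_-)$ is good for $f$ and Theorem \ref{th:main2} (equivalently Corollary \ref{goodcouple1}) applies. The necessity direction, however, departs from the paper and contains a genuine gap at the ``promotion'' step. Your supersolution is $u'=u+\BBG_V[\tau_-]+\BBK_V[\nu_-]$, which by Lemma \ref{represent} equals $\BBG_V[\tau_+]+\BBK_V[\nu_+]-\BBG_V[f(u_+)]$ and satisfies $-L_Vu'=\tau_+-f(u_+)$. To convert the ordered pair $u_+\leq u'$ into an actual solution of $-L_Vw+f(w)=\tau_+$, $\tr^*w=\nu_+$, you must invoke Proposition \ref{existence1} (or some iteration), and that proposition requires $f(u')\in L^1(\Gw;\Phi_V)$. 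You only know $f(u_+)\in L^1(\Gw;\Phi_V)$, and $u'$ exceeds $u_+$ by the nonnegative function $\BBG_V[\tau_-]+\BBK_V[\nu_-]-u_-$; since $f$ is assumed only continuous and nondecreasing, with no growth or doubling condition, $f(u')$ is not controlled by $f(u_+)$ and need not lie in $L^1(\Gw;\Phi_V)$ (it need not even be locally integrable near interior atoms of $\tau_-$ or boundary atoms of $\nu_-$, which are always admissible since negative data are automatically good here). The proposed Picard iteration has the same defect: its very first step requires $\BBG_V[f(u')]<\infty$, i.e.\ $f(u')\in L^1(\Gw;\Phi_V)$, and the test-against-$\Phi_V$ estimate (Lemma \ref{tau-PhiV}) presupposes the integrability it is meant to produce. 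By contrast, the step you single out as the main obstacle, $\tr^*u_-\leq\nu_-$, is actually manageable: from \eqref{represent1} one gets $u_-\leq\BBG_V[\tau_-+f(u)]+\BBK_V[\nu_-]$, and Lemma \ref{basic-sup} together with Proposition \ref{tr-equiv} yields the trace inequality.

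The paper avoids this difficulty by never leaving the truncated setting: it takes the solutions $u_n$ of \eqref{bvpn'} for the \emph{signed} data $(\tau,\nu)$ (which exist because $f_n$ is bounded), notes that $\{u_n\}$ decreases and that $u$ is a subsolution of each truncated problem (here $f(w)=f(w_+)$ is used), so $u\leq\tl u:=\lim u_n$, and then identifies the data of $\tl u$ by applying the inverse maximum principle to $\tl u-u\geq0$ to get $(\tl\tau-\tau)_c\geq0$, Lemma \ref{tau*d} to get $\tau_d=\tl\tau_d$, and the trace comparison to get $\nu\leq\tl\nu$; finally $(\tl\tau,\tl\nu)\prec(\tau_+,\nu_+)^\#$ by Theorem \ref{signed-reduced}(c). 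These ingredients --- the inverse maximum principle and the diffuse-part lemma --- have no counterpart in your argument, and some substitute for them seems unavoidable. If you wish to salvage your route, you would need either an a priori bound showing $f(u')\in L^1(\Gw;\Phi_V)$ (which in effect presupposes that $(\tau_+,\nu_+)$ is good) or a passage through the $f_n$-approximations, at which point you are back to the paper's identification problem for the limiting data.
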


\noindent This result extends \cite[Theorem 6]{BP}. 

\vskip 2mm


Our main tools include: two-sided estimates of $\BBG_V[\tau]$, $\tau\in \GTM_+(\Gw;\Phi_V)$, and $\BBK_V[\nu]$, $\nu\in \GTM_+(\bdw)$ \cite{MM-22}, the inverse maximum principle \cite{Dup-Pon}, Kato's inequality and its extension  due to \cite{BP'}  and a result of \cite{BMP} 
regarding the diffuse part of reduced measures.

Here is the plan of the paper. In Section 2 we recall the estimates of \cite{MM-22} as well as properties of sub and supersolutions established in \cite{MM-note} that are frequently used in the present paper.
In Section 3 we study the problem of reduced couple for \eqref{bvp-gen} with positive measures. Theorem \ref{th:main1} is a consequence of Theorems \ref{t:reduced2} and \ref{t:reduced3}. Section 4 is devoted to problem \eqref{bvp-gen} with signed measures. Theorem \ref{th:main2} is a consequence of Theorem \ref{signed-reduced} and Proposition \ref{g.couple}. The section is completed by the proof of Theorem \ref{th:main3}.

\section{Some results on sub and supersolutions.}

In this section we gather several results from \cite{MM-22} and \cite{MM-note} that are frequently used  in the sequel.

\subsection{Estimates of $L_V$ harmonic functions and $L_V$ potentials.}
$\,$\vskip 1mm

\noindent The estimates stated below are derived in \cite{MM-22}.

\begin{theorem} \label{Knu} (\cite[Theorem 3.1]{MM-22})
Assume that \eqref{A1}, \eqref{A2} and \eqref{C1} hold. Then for any $\nu\in \GTM_+(\partial \Omega)$, 
$$
	\rec{C}\|\nu\|_{\GTM(\partial \Omega)}\leq\int_{\Gs_\gb}\frac{\Phi_V}{\delta}\BBK_V[\nu]\,dS \leq C\|\nu\|_{\GTM(\partial \Omega)} \quad \forall \beta \in (0,\beta_0),
$$
	where the constant $C$ depends on $\bar a,\Gw$ and the constants in \eqref{C1}.
\end{theorem}

Next is an estimate of $L_V$ potentials.

\begin{theorem} \label{G-I} (\cite[Theorems 3.3 and 3.4]{MM-22})
	
	(i) Assume \eqref{A1} and \eqref{A2} hold.
	Then there exists a  constant $c$ depending on $\bar a$, $\Omega$ such that, for every $\tau\in \GTM_+(\Gw;\Phi_V)$,
	$$
	\rec{c}\int_{\Gw} \Phi_V \,d\tau\leq \int_{\Gw}\frac{\Phi_V}{\delta}\BBG_V[\tau]\,dx.
	$$

(ii)   Assume \eqref{A1}, \eqref{A2} and \eqref{C1} hold. Then there exists $c'>0$ depending on $\bar a$, $\Gw$ and the constants in \eqref{C1} such that for every $\tau\in \GTM_+(\Gw;\Phi_V)$,
$$
\int_{\Gw}\frac{\Phi_V}{\delta}\BBG_V[\tau] \,dx \leq c'\int_\Gw \Phi_V \,d\tau.
$$
\end{theorem}

\subsection{Remarks on subsolutions and supersolutions}

We list some properties of subsolutions and supersolutions from \cite{MM-note}.

\begin{lemma}\label{subhar=0} 
Let $w \in L_{\rm loc}^1(\Omega)$ be a non-negative $L_V$ subharmonic function. If $\tr_V w=0$ then $w\equiv 0$. 
\end{lemma}

This is a consequence of \cite[Corollary 2.6]{MM-note}.

\begin{lemma}\label{exist-KV} (\cite[Corollary 2.8]{MM-note}) 
Let $u \in L_{\rm loc}^1(\Omega)$ and suppose that $-L_Vu=\tau\in \GTM(\Gw;\Phi_V)$.
In addition assume that for some smooth exhaustion $\{D_n\}$ of $\Omega$, 
\begin{equation} \label{supDn} \sup_{n} \int_{\partial D_n} P_{V,n}(x_0,y)|u(y)|\, dS(y) < + \infty.
\end{equation}
Then $\tr_V u=:\nu$ exists and $u= \BBG_V[\tau]+\BBK_V[\nu]$ in $\Omega$.
\end{lemma}

\Remark  If $\tr_Vu$ exists then, by definition, \eqref{supDn} holds.

If $u\geq 0$,  condition  \eqref{supDn} is not needed. In this case the result (stated below) is a consequence of the Riesz decomposition lemma.

\begin{lemma}\label{exist_tr_pos} (\cite[Lemma 2.11]{MM-note}) 
Let $u\in L^1_{\rm loc}(\Gw)$ be a positive function such that $-L_Vu=\tau\in \GTM(\Gw;\Phi_V)$. Then $u$ has an $L_V$ \btr, say $\nu$, and $u=\BBG_V[\tau] +\BBK_V[\nu]$ in $\Omega$.

\end{lemma}
\noindent\textit{Notation.}\hskip 2mm   A function $u$ is \textit{$L_V$ perfect} if $u=\BBG_V[\tau] + \BBK_V[\nu]$ for some $\tau \in \GTM(\Omega;\Phi_V)$ and $\nu \in \GTM(\partial \Omega)$.

\begin{lemma} \label{tr+} (\cite[Lemma 3.3]{MM-note})
Suppose that $u$ is an 	$L_V$ perfect function. If $\tr_V u \leq 0$ then $\tr_V u_+ = 0$.
\end{lemma}

Consider the equation
\begin{equation}\label{Eq=tau}
-L_V u+f(u)= \tau \quad \text{in } \Omega
\end{equation}
and the \bvp
\begin{equation}\label{bvp-f}
-L_V u+f(u)=\tau \qtxt {in } \Gw, \q \tr_V u=\nu,
\end{equation}
where $f$ satisfies \eqref{f-cond}, $\tau\in \GTM(\Gw;\Phi_V)$ and $\nu \in \GTM(\partial \Omega)$.

\begin{definition}\label{d:sub-sup}
	 Let $u\in L^1_{\rm loc}(\Gw)$ be a function \sth $f(u)\in L^1_{\rm loc}(\Gw)$.
	
	The function $u$ is a subsolution (supersolution) of \eqref{Eq=tau} if 
	$-L_Vu + f(u)\leq (\geq)\tau$ in $\Omega$ in the distribution sense.

	The function $u$ is a subsolution (supersolution) of \eqref{bvp-f} if it is a subsolution (supersolution) of \eqref{Eq=tau},
	$f(u)\in L^1(\Gw;\Phi_V)$ and $u$ has an $L_V$ boundary trace such that $\tr_Vu\leq \nu$ ($\tr_Vu\geq \nu$).
\end{definition}

\begin{lemma}\label{l:subsup_rep} (\cite[Lemma 3.1]{MM-note})
	Assume that $u\in L^1_{\rm loc}(\Gw)$ and $f(u)\in L^1(\Gw;\Phi_V)$.
	
	If $u$ is a subsolution (resp. supersolution) of problem \eqref{bvp-f} then $u$ is $L_V$ perfect.
	More precisely, there exist measures $\gl\in \GTM_+(\Gw;\Phi_V)$ and $\gs\in \GTM_+(\bdw)$ \sth:
\begin{equation}\label{e:subsup_rep}\BAL
	u &=  \BBG_V[\tau-f(u) -\gl]+\BBK_V[\nu-\gs] \\ (u &=  \BBG_V[\tau-f(u) +\gl]+\BBK_V[\nu+\gs]). 
	\EAL\end{equation}
\end{lemma}

\begin{lemma} \label{sub-sup} (\cite[Lemma 3.4]{MM-note})
(i)	Let $u_1$ (resp. $u_2$)  be a supersolution (resp. subsolution) of \eqref{bvp-f}. Then $u_2\leq u_1$.

(ii) Problem \eqref{bvp-f} has at most one solution.
\end{lemma}

%

%

\begin{lemma}\label{exist-subsup} (\cite[Corollary 3.7]{MM-note}) 
Suppose that $u_1$, $u_2$ are respectively a supersolution and a subsolution of \eqref{Eq=tau} \sth $f(u_i)\in L^1(\Gw;\Phi_V)$. If $u_i$ has an $L_V$ \btr, say $\nu_i$, $i=1,2$, and $\nu_2 \leq  \nu_1$ then problem \eqref{bvp-f} has a (unique) solution for every measure $\nu$ \sth $\nu_2\leq\nu\leq \nu_1$.
\end{lemma}

\section{The reduced measures for couples of positive measures.}
\noindent\textit{Notation.}\hskip 2mm If $f$ and $g$ are two non-negative functions on a set $X$,
we say that $f$ and $g$ are similar if there exists $c>0$ \sth
$$\rec{c}f(x)\leq g(x)\leq cf(x) \forevery x\in X.$$
This relation is denoted by $f\sim g$. 

\begin{theorem}\label{t:reduced2}
Let $(\tau,\nu)\in \GTM_+(\Gw;\Phi_V) \ti \GTM_+(\bdw)$. Let $\{f_n\}$ be a sequence of continuous, bounded, non-decreasing functions on $\BBR$ \sth $f_n(0)=0$ and $(f_n)_\pm\uparrow f_\pm$.

 Then the \bvp 
\begin{equation}\label{bvpn'}
-L_Vu +f_n(u) =\tau \qtxt{in }\Gw, \q \tr_V u=\nu,
\end{equation}
has a unique solution  $u_n=u_n(\tau,\nu)$.
The sequence $\{u_n\}$ is a decreasing sequence of positive functions and its limit $u^\#= u^\#(\tau,\nu)$ has the following properties.

(a) $u^\#$ satisfies
\begin{equation}\label{u*1}
\norm{u^\#}_{L^1(\Gw;\Phi_V/\gd)} +\norm{f(u^\#)}_{L^1(\Gw;\Phi_V)}\leq C (\norm{\nu}_{\GTM(\partial \Omega)} + \norm{\tau}_{\GTM(\Gw;\Phi_V)})
\end{equation}
and
\begin{equation}\label{ineq-u*}
u^\#+ \BBG_V[f(u^\#)] \leq \BBG_V[\tau]+\BBK_V[\nu]=:\tl w \quad \text{in } \Omega.
\end{equation}

(b) There exists a non-negative measure $\tau^\#\leq \tau$ \sth
\begin{equation}\label{equ*}
-L_V u^\#+ f(u^\#)=\tau^\# \quad \text{in } \Omega.
\end{equation}

(c) $u^\#$ has $L_V$ boundary trace $0\leq \nu^\#\leq \nu$. Thus
\begin{equation}\label{u-sol}
u^\#+\BBG_V[f(u^\#)]= \BBG_V[\tau^\#] + \BBK_V[\nu^\#] \quad \text{in } \Omega.
\end{equation} 

(d) $u^\#$ is the largest subsolution of problem 
\begin{equation}\label{bvp-tau}
-L_V u + f(u)=\tau \quad \text{in } \Omega, \q \tr_V u=\nu.
\end{equation}
In particular, if \eqref{bvp-tau} has a solution $u$ then $u^\#=u$,  $\tau^\#=\tau$ and $\nu^\#=\nu$.

\end{theorem}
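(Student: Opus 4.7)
The plan is to mirror the strategy of Theorem \ref{t:reduced1}, adapted to accommodate the interior datum $\tau$. For each $n$, since $f_n$ is bounded, continuous and non-decreasing, \eqref{bvpn'} has a unique solution $u_n$ by \cite[Proposition 1.9]{MN2}, whose argument carries over verbatim under the present hypotheses. Comparison with $v\equiv 0$ via Lemma \ref{sub-sup} gives $u_n\ge 0$, and since $f_n\le f_{n+1}$, $u_{n+1}$ is a subsolution of the $n$-th problem, hence $u_{n+1}\le u_n$ by the same lemma. The uniform bound \eqref{u*1} follows from \cite[Theorem 3.8]{MM-pafa1}. Setting $u^\#:=\lim u_n\ge 0$, a Dini-type squeeze gives pointwise convergence $f_n(u_n)\to f(u^\#)$: for fixed $m\le n$, $f_m(u_n)\le f_n(u_n)\le f(u_n)$, so $f_m(u^\#)\le\liminf f_n(u_n)\le\limsup f_n(u_n)\le f(u^\#)$, and letting $m\to\infty$ closes the squeeze. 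Fatou then transfers the bound to $f(u^\#)\in L^1(\Gw;\Phi_V)$.

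For (a), pass to the limit in the representation
\[
u_n+\BBG_V[f_n(u_n)]=\BBG_V[\tau]+\BBK_V[\nu]=:\tl w
\]
via Fatou on $\BBG_V[f_n(u_n)]$, yielding \eqref{ineq-u*}.

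The heart of the proof is (b)--(c). Rewriting the identity as $\BBG_V[f_n(u_n)]=\tl w-u_n$ exhibits $\BBG_V[f_n(u_n)]$ as a \emph{monotone increasing} sequence of $L_V$-potentials converging pointwise to the finite function $h:=\tl w-u^\#\ge 0$. By a standard Perron-type argument (cf.\ \cite{An88}), $h$ is $L_V$-superharmonic, and hence admits a Riesz--Martin decomposition $h=\BBG_V[\gl]+\BBK_V[\gh]$ with $\gl\in\GTM(\Gw;\Phi_V)$ positive and $\gh\in\GTM(\bdw)$ positive. The pointwise convergence $f_n(u_n)\to f(u^\#)$, combined with the standard behaviour of Riesz measures under monotone limits of potentials, gives $\gl\ge f(u^\#)$; set $\gs:=\gl-f(u^\#)\ge 0$. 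Substituting,
\[
u^\#+\BBG_V[f(u^\#)]+\BBG_V[\gs]+\BBK_V[\gh]=\BBG_V[\tau]+\BBK_V[\nu],
\]
so defining $\tau^\#:=\tau-\gs$ and $\nu^\#:=\nu-\gh$ yields \eqref{u-sol}, \emph{provided} these measures are non-negative. The inequality $\gh\le\nu$ follows from the monotonicity of the $L_V$-harmonic envelope: $h\le\tl w$ forces $\BBK_V[\gh]\le\BBK_V[\nu]$, hence $\gh\le\nu$. The inequality $\gs\le\tau$ (equivalently, $\tau^\#\ge 0$) is the main obstacle; I would establish it via the inverse maximum principle of \cite{Dup-Pon} together with the characterisation of the defect of reduced measures from \cite{BMP}, which forces $\gs$ to be concentrated on, and dominated by, the singular (with respect to the relevant $L_V$-capacity) part of $\tau$.

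Finally, for (d), let $w$ be any subsolution of \eqref{bvp-tau}; then $-L_Vw+f_n(w)\le -L_Vw+f(w)\le\tau$ and $\tr^*w\le\nu$ make $w$ a subsolution of the $n$-th BVP, so Lemma \ref{sub-sup} yields $w\le u_n$, hence $w\le u^\#$. If \eqref{bvp-tau} has a solution $u$, applying this to $w=u$ gives $u\le u^\#$; conversely $u^\#$ is itself a subsolution of \eqref{bvp-tau} (since $\tau^\#\le\tau$, $\nu^\#\le\nu$), so Lemma \ref{sub-sup} with $u$ as supersolution gives $u^\#\le u$. Thus $u=u^\#$, and the equation forces $\tau^\#=\tau$ and $\nu^\#=\nu$.
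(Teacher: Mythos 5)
Your handling of existence, positivity, monotonicity, and parts (a) and (d) matches the paper's argument in substance. (Two small discrepancies: for $\tau\neq 0$ the paper obtains existence of $u_n$ from Proposition \ref{existence1}, using $\tl w=\BBG_V[\tau]+\BBK_V[\nu]$ as a supersolution, rather than from \cite[Proposition 1.9]{MN2}, which is invoked only when $\tau=0$; and \eqref{u*1} is derived from the representation \eqref{eq-un} together with the two-sided estimates of Theorems \ref{Knu} and \ref{G-I}, not from a single citation.) For (b)--(c) you take a genuinely different route: the paper passes to the limit in the distributional identity $-\int_\Gw u_nL_V\gz\,dx+\int_\Gw f_n(u_n)\gz\,dx=\int_\Gw\gz\,d\tau$ to get \eqref{int-u*}, and then obtains $\nu^\#\le\nu$ by a Kato argument applied to $z=u^\#+\BBG_V[f(u^\#)-\tau^\#]$ combined with Proposition \ref{tr-equiv}; you instead read everything off the Riesz--Martin decomposition of the increasing limit $h=\lim\BBG_V[f_n(u_n)]=\tl w-u^\#$. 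That skeleton is viable, and your derivation of $\gh\le\nu$ from monotonicity of the greatest $L_V$-harmonic minorant is correct. Your claim $\gl\ge f(u^\#)$ needs an argument rather than an appeal to ``standard behaviour'' (it follows because $f_n(u_n)\to\gl$ in $\mathcal D'(\Gw)$, so for $\gz\ge0$ Fatou gives $\gl(\gz)=\lim\int f_n(u_n)\gz\,dx\ge\int f(u^\#)\gz\,dx$), but that is easily repaired.

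The genuine gap is exactly where you flag it: $\gs\le\tau$, i.e.\ $\tau^\#\ge0$. This cannot be deferred to a one-sentence pointer, because it is the central content of part (b) --- without it $\tau^\#$ is not a positive measure and the reduced couple is not defined. The inequality splits into two claims of very different difficulty. The concentrated part $\gs_c\le\tau_c$ does follow from the inverse maximum principle of \cite{Dup-Pon}, but you must say to which non-negative function it is applied: since $Vu^\#$ and $f(u^\#)$ are diffuse, $(-\Gd u^\#)_c=(\tau-\gs)_c$, and $u^\#\ge0$ then yields $(\tau-\gs)_c\ge0$. The diffuse part requires showing $\gs_d=0$ (equivalently $(\tau^\#)_d=\tau_d$), and this is precisely the content of Lemma \ref{tau*d}: it needs the truncations $T_k(u_n)$, the Brezis--Ponce refinement of Kato's inequality for $\Gd T_k(u_n)$ from \cite{BP'} and \cite[Corollary 4.9]{BMP}, and a careful treatment of the extra term $\chi_{[u_n>k]}Vu_n$ created by the singular potential --- none of which is present in your sketch. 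As written, your proposal establishes $\tau^\#\le\tau$ and $\nu^\#\le\nu$ but not $\tau^\#\ge0$, so parts (b) and (c) are incomplete.
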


\begin{proof} 
 The function $\tl w := \BBK_V[\nu] + \BBG_V[\tau]$ is a supersolution of the equation
 $$-L_Vu+f_n(u)=\tau \quad \text{in } \Omega$$ 
and $\tr_V\tl w=\nu$. Obviously $v \equiv 0$ is a subsolution, $v\leq \tl w$ and $f_n(\tl w)\in L^1(\Gw;\Phi_V)$. 

Therefore, by Lemma \ref{exist-subsup}, there exists a unique solution $u_n = u_n(\tau,\nu)$ of the \bvp
\eqref{bvpn'} satisfying $0 \leq u_n \leq \tilde w$ in $\Omega$. The solution $u_n$ satisfies
\begin{equation}\label{eq-un}
u_n+\BBG_V[f_n(u_n)]= \BBG_V[\tau]+\BBK_V[\nu] = \tl w \quad \text{in } \Omega.
\end{equation}
Put $w:=u_{n+1}-u_n$. Then $w$ is $L_V$ perfect and   $\tr_V w =0$. Consequently, by Lemma \ref{tr+}, $\tr_V w_+=0$. Furthermore
$$-L_V w + f_{n+1}(u_{n+1})-f_n(u_n) =0 \quad \text{in } \Omega.$$
 By Kato's inequality
$$-L_V w_+ +  (f_{n+1}(u_{n+1})-f_n(u_n))\sign_+ w \leq 0 \quad \text{in } \Omega.$$ 
In the set $\{x\in\Gw: w_+\geq 0\}$: 
$$f_{n+1}(u_{n+1})-f_n(u_n) \geq f_{n+1}(u_n)-f_n(u_n) \geq 0.$$ Thus $w_+$ is  \LVsub in $\Omega$. As $\tr_Vw_+=0$, Lemma \ref{subhar=0} yields  $w_+=0$, i.e., $u_{n+1}\leq u_n$ in $\Omega$.

(a) \hskip 2mm Let $u^\# = \lim u_n$, then $0 \leq u^\# \leq \tilde w$ in $\Omega$. By Dini's Lemma, $f_n(u_n)\to f(u^\#)$ a.e. in $\Omega$.
Therefore, by \eqref{eq-un} and
Fatou's lemma, we obtain \eqref{ineq-u*}.

By Theorems \ref{Knu} and \ref{G-I}, we have
$$\int_\Gw \frac{\Phi_V}{\gd}\tl w\, dx = \int_\Gw \frac{\Phi_V}{\gd} (\BBK_V[\nu] + \BBG_V[\tau]) \,dx \sim \norm{\tau}_{\GTM(\Gw;\Phi_V)}+\norm{\nu}_{\GTM(\partial \Omega)} $$
and
$$\int_\Gw \frac{\Phi_V}{\gd}\BBG_V[f_n(u_n)]\,dx \sim \int_\Gw f_n(u_n) \Phi_V \,dx.$$
Therefore, multiplying \eqref{eq-un} by $\Phi_V/\delta$ and integrating over $\Gw$, we obtain the following similarity relations
\begin{equation}\label{sim-un} \begin{aligned}
&\int_\Gw u_n \frac{\Phi_V}{\gd}\,dx +\int_\Gw f_n(u_n)\Phi_V\,dx \\
 &\sim \int_\Gw u_n \frac{\Phi_V}{\gd}\,dx + \int_\Gw \frac{\Phi_V}{\gd}\BBG_V[f_n(u_n)]\,dx \\
 &= \int_\Gw \tl w \frac{\Phi_V}{\gd}\,dx \sim \norm{\tau}_{\GTM(\Gw;\Phi_V)}+\norm{\nu}_{\GTM(\bdw)}.
\end{aligned}
\end{equation}
Letting $n\to \infty$ and using Fatou's lemma, we obtain \eqref{u*1}.

(b) Let $\gz\in C_c^\infty(\Gw)$. Multiplying \eqref{eq-un} by $-L_V\gz$ and integrating over $\Omega$, we obtain
\begin{equation*}
- \int_\Gw u_n L_V\gz\, dx - \int_\Gw \BBG_V[f_n(u_n)]L_V\gz\, dx = - \int_\Gw \BBG_V[\tau] L_V\gz\, dx.
\end{equation*}
We used the fact that $\BBK_V[\nu]$ is $L_V$ harmonic. Further, for every $\tau\in \GTM(\Gw;\Phi_V)$,  $-L_V \BBG_V[\tau] = \tau$, i.e.,
$$ -\int_\Gw  \BBG_V[\tau]L_V\gz\, dx = \int_\Gw \gz \, d\tau.$$
Hence
\begin{equation*}
- \int_\Gw u_n L_V\gz\, dx +\int_\Gw f_n(u_n)\gz\, dx = \int_\Gw \gz \, d\tau.
\end{equation*}

Recall that $\{u_n\}$ converges to $u^\#$ and is dominated by $u_1$  in $L^1(\Gw;\Phi_V/\delta)$ and $\{ f_n(u_n)\}$ converges to $f(u^\#)$ and -- by \eqref{sim-un} -- is bounded in $L^1(\Gw;\Phi_V)$. 
Consequently, using Fatou's lemma, 
$$
-C\int_\Gw \gz\,dx\leq - \int_\Gw u^\# L_V\gz\, dx + \int_\Gw f(u^\#)\gz\, dx \leq \int_\Gw \gz \, d\tau
$$
for every $0\leq \gz\in C_c^\infty(\Gw)$ where $C\geq 0$ a constant independent of $\gz$.
Thus there exists a bounded measure $\tau^\#\leq \tau$ \sth \eqref{equ*} holds. 

Now, in  
Lemma \ref{tau*d} below, it is shown that $\tau^\#\geq \tau_d$. This is based only on the assumptions of the present theorem, the definition of $u^\#$ as the limit of the 
decreasing sequence $\{u_n\}$ and what is proved above. Therefore, as $\tau\geq 0$, we conclude that $\tau^\#\geq 0$. 
This completes the proof of part (b).

(c) By \eqref{ineq-u*}, $u^\#\leq \tl w$. Since $\tr_V\tl w$ exists and $u^\# \geq 0$, it follows that 
	$$\BAL
&\sup_{n} \int_{\partial D_n} P_{V,n}(x_0,y)|u^\#(y)|dS(y) \leq \\
&\sup_{n} \int_{\partial D_n} P_{V,n}(x_0,y) \tilde w(y) dS(y) < +\infty.
\EAL$$
By \eqref{equ*}, as $f(u^\#)\in L^1(\Gw;\Phi_V)$, the function $v:=u^\#+\BBG_V[f(u^\#)]$ satisfies $-L_V v=\tau$ and $v\geq 0$. Therefore, by Lemma \ref{exist-KV} and \eqref{tr-prop'}, $\tr_V\,v=:\nu^\#$ exists, $\tr_V\,u^\# = \tr_V\,v$ and \eqref {u-sol} holds.
As $0\leq u^\#\leq \tl w$, 
$$0\leq \nu^\#\leq \tr_V\tl w=\nu.$$ 

(d) Let $w$ be a positive subsolution of \eqref{bvp-tau}. Then
$$-L_Vw + f_n(w)\leq -L_Vw + f(w) \leq \tau \quad \text{in } \Omega,\q \tr_V w\leq \nu.$$
On the other hand, we have
$$-L_Vu_n + f_n(u_n)= \tau \quad \text{in } \Omega,\q \tr_V u_n = \nu.$$
By Lemma \ref{sub-sup}, $w\leq u_n$ and thus $w\leq u^\#$.
This proves (d). 

Obviously, if \eqref{bvp-tau} has a solution $u$ then it is the largest subsolution of the problem so that  $u^\#=u$.
\end{proof}

\begin{definition}\label{reduced_couple}
A measure $\tau \in \GTM_+(\Gw;\Phi_V)$ is a \textit{good measure} \wrto $f$ if there exists a solution $u$ of equation \eqref{Eq=tau} \sth $f(u)\in L^1(\Gw;\Phi_V)$.

A couple of measures $(\tau,\nu)\in \GTM_+(\Gw;\Phi_V)\ti \GTM_+(\bdw)$ is a \textit{good couple} \wrto $f$ if there exists a solution $u$ of problem \eqref{bvp-tau}.

The couple $(\tau^\#,\nu^\#)$ that satisfies \eqref{u-sol}  is called the \textit{reduced couple} of $(\tau,\nu)$.
\end{definition}

\remark\label{exp1} We note that as  a consequence of Theorem \ref {t:reduced2} parts (b) and (c):\\ [1mm]
(i) For every $\nu\in \GTM_+(\bdw)$ the reduced couple of $(0,\nu)$ is $(0,\nu^*)$ where $\nu^*$ is the largest good measure dominated by $\nu$.\\
(ii) For every $\tau\in \GTM_+(\Gw;\Phi_V)$ the reduced couple of $(\tau,0)$ is $(\tau^*,0)$  where $\tau^*$ is the largest good measure dominated by $\tau$.

\vskip 2mm

\noindent\textit{Notation.}\hskip 2mm (a)  Let $\tau\in \GTM_+(\Gw;\Phi_V)$. Denote by $\tau^\#(\nu)$ the measure $\tau^\#$ in Theorem \ref{t:reduced2}. In particular $\tau^\#(0)$
  is the reduced measure of problem
$$
L_Vu + f(u)=\tau \quad \text{in } \Omega, \q \tr_V u=0.
$$

(b) Let $\gl$ be a Borel measure in $\Gw$ \sth $\gl=\gl_+-\gl_-$ where $\gl_\pm$ are positive Radon measures. It is well-known (see e.g. \cite{BMP}) that $\gl$ has a unique representation of the form $\gl= \gl_c+\gl_d$ where $\gl_d$ vanishes on sets of (Newtonian) capacity zero while  $\gl_c$ is concentrated on a set of zero capacity. We say that $\gl_d$ is the \emph{diffuse} part of $\gl$ while $\gl_c$ is the \textit{concentrated} part of $\gl$. If $\gl=\gl_d$ we say that $\gl$ is a diffuse  measure. If  $\gl=\gl_c$ we say that $\gl$ is a concentrated measure.

For the proof of the next theorem we need a version of \cite[Lemma 4.1]{BMP} suitable for the present problem. The proof is essentially the same as in \cite{BMP}, but some slight modifications are needed. 
For the convenience of the reader we provide the proof below.

\begin{lemma}\label{tau*d}
Let $(\tau,\nu)\in \GTM_+(\Gw;\Phi_V) \ti \GTM_+(\bdw)$. Then under the assumptions and with the notation of Theorem \ref{t:reduced2}, we have
\begin{equation}\label{taud}
 \tau^\#\geq \tau_d \qtxt{and}\q   (\tau^\#)_d=\tau_d.
\end{equation} 
\end{lemma}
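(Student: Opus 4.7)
The proof follows \cite[Lemma 4.1]{BMP}, with the modifications needed for the operator $L_V$ and the weighted formulation. The whole statement will be reduced to the single claim that $\gs := \tau - \tau^\# \geq 0$ is concentrated on a set of zero Newtonian capacity. Indeed, from $\tau = \tau^\# + \gs$ and $\gs_d = 0$ one reads $\tau_d = (\tau^\#)_d$, whence $\tau^\# \geq (\tau^\#)_d = \tau_d$, establishing both assertions.

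To identify $\gs$ explicitly, I test the approximate equations $-L_V u_n + f_n(u_n) = \tau$ against $\gz \in C_c^\infty(\Gw)$ and pass to the limit. The ingredients are $u_n \downarrow u^\#$ in $L^1_{\loc}(\Gw)$, the uniform $L^1(\Gw;\Phi_V)$ bound on $f_n(u_n)$ from \eqref{sim-un}, and the pointwise a.e.\ convergence $f_n(u_n) \to f(u^\#)$ established during the proof of Theorem \ref{t:reduced2}. Along a subsequence, the absolutely continuous measures $f_n(u_n)\,dx$ converge in the weak-$*$ topology of Radon measures on $\Gw$ to $f(u^\#)\,dx + \rho$ for a non-negative Radon measure $\rho$; substituting into the weak equation gives $\tau = -L_V u^\# + f(u^\#) + \rho = \tau^\# + \rho$, i.e.\ $\gs = \rho$.

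To show $\rho$ is concentrated, observe that $u_n \leq \tl w := \BBG_V[\tau] + \BBK_V[\nu]$ by \eqref{ineq-u*} and the identity \eqref{eq-un}, and $\tl w$ is $L_V$-superharmonic, hence finite quasi-everywhere. Setting $E := \{\tl w = +\infty\}$, one has $\mathrm{cap}(E)=0$ by the $L_V$-potential theory of Section 2.2: the finiteness set of an $L_V$-superharmonic function is quasi-all of $\Gw$, a consequence of the two-sided Green and Martin kernel estimates imported from \cite{MM-pafa1}. On $\Gw\setminus E$ the sequence $(u_n)$ is locally bounded, so Lebesgue's dominated convergence theorem yields $f_n(u_n)\Phi_V \to f(u^\#)\Phi_V$ in $L^1_{\loc}(\Gw\setminus E)$. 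Consequently the weak-$*$ defect $\rho$ puts no mass on $\Gw \setminus E$ and must be supported on $E$; as $\mathrm{cap}(E)=0$, $\rho$ is concentrated.

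The main obstacle is the potential-theoretic fact $\mathrm{cap}(E)=0$: one must verify, in the $L_V$-framework with the weight $\Phi_V$, that the blow-up set of an $L_V$-superharmonic function has zero Newtonian capacity. This is the natural extension to $L_V$ of the classical finiteness theorem for Newtonian potentials and uses the sharp estimates of $G_V$ and $K_V$ from Section 2.2. The remaining ingredients (monotone convergence of $u_n$, a priori $L^1(\Phi_V)$ bound on $f_n(u_n)$, representation \eqref{ineq-u*}) are immediate from the previous results.
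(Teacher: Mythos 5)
Your overall reduction is fine: once $\gs:=\tau-\tau^\#\geq 0$ is known to be concentrated on a set of zero capacity, both assertions of \eqref{taud} follow, and identifying $\gs$ with the weak-$*$ defect $\rho$ of $\{f_n(u_n)\,dx\}$ is legitimate. The gap is in the only hard step, namely showing that $\rho$ is concentrated. You argue that $u_n\leq\tl w:=\BBG_V[\tau]+\BBK_V[\nu]$, that $E:=\{\tl w=+\infty\}$ is polar, and that on $\Gw\setminus E$ the sequence $(u_n)$ is ``locally bounded,'' so dominated convergence kills the defect there. This does not work. An $L_V$-superharmonic function is only lower semicontinuous and finite quasi-everywhere; it need not be locally bounded on any open subset (take $\tau$ so that $\tl w\geq\sum_k 2^{-k}G_V(\cdot,x_k)$ with $\{x_k\}$ dense), and $\Gw\setminus E$ is merely a Borel set, so ``locally bounded on $\Gw\setminus E$'' has no usable meaning. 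What you actually have off $E$ is the pointwise bound $f_n(u_n)\leq f(\tl w)$ with $f(\tl w)$ finite but in general \emph{not} locally integrable --- if it were, there would be no defect at all and every positive $\tau$ would be good. Almost-everywhere convergence of a nonnegative sequence with a merely pointwise-finite majorant does not prevent the weak-$*$ defect from having an absolutely continuous part (spreading spikes give $\rho\ll dx$, $\rho\neq0$), so nothing in your argument localizes $\rho$ on $E$; moreover the inclusion of the support of $\gs$ in $E$ is itself not obvious, and what is really needed (and suffices) is the weaker statement $\gs\leq\tau_c$.

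The paper obtains exactly that by a truncation argument you would need to import: the Brezis--Ponce inequality $\Gd T_k(u_n)\leq\chi_{[u_n\leq k]}(\Gd u_n)_d+((\Gd u_n)_c)_+$, combined with $(\Gd u_n)_d=-Vu_n+f_n(u_n)-\tau_d$ and $(\Gd u_n)_c=-\tau_c$, yields
$-L_VT_k(u_n)+f_n(T_k(u_n))\geq-\chi_{[u_n>k]}Vu_n+\chi_{[u_n\leq k]}\tau_d$;
testing against $0\leq\gz\in C_c^\infty(\Gw)$ and letting first $n\to\infty$ and then $k\to\infty$ gives $\tau^\#\geq\tau_d$ directly, after which $(\tau^\#)_d=\tau_d$ follows from $\tau^\#\leq\tau$. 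Some argument of this type (or a genuine capacitary estimate replacing your dominated-convergence step) is required to close your proof.
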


\begin{proof}  Let $u_n$ be the solution of \eqref{bvpn'}. Then $u_n\geq 0$, the sequence $\{u_n\}$ is decreasing and, as in Theorem \ref{t:reduced2}, the function $u^\# :=\lim u_n$ satisfies 
\begin{equation}\label{taud0}
-L_V u^\# +f(u^\#)=\tau^\# \quad \text{in } \Omega,
\end{equation}
where $\tau^\#$ is a measure \sth $\tau^\# \leq \tau$.

Denote $T_k(s):= \min(k,s)$, $s\in \BBR$. By \cite{BP'} and \cite[Corollary 4.9]{BMP},
$$ \Gd T_k(u_n)\leq \chi_{[u_n\leq k]}(\Gd u_n)_d + ((\Gd u_n)_c)_+,$$
where $\chi_A$ denotes the characteristic function of $A \subset \R^N$. 
 Since $u_n$  satisfies \eqref{bvpn'}, we obtain
$$ (\Gd u_n)_d = -Vu_n + f_n(u_n) -\tau_d, \q (\Gd u_n)_c=-\tau_c.$$
As $\tau\geq 0$, these relations and the previous inequality yield
$$ \Gd T_k(u_n)\leq \chi_{[u_n\leq k]} ( -Vu_n + f_n(u_n) -\tau_d) \quad \text{in } \Omega,$$
and
$$\BAL -L_V T_k(u_n) &\geq -VT_k(u_n) +  \chi_{[u_n\leq k]}(Vu_n - f_n(u_n) +\tau_d) \\
&= -\chi_{[u_n> k]}V  k  -  \chi_{[u_n\leq k]} (f_n(u_n) -\tau_d).
\EAL $$
Since $\chi_{[u_n\leq k]}f_n(u_n) \leq f_n(T_k(u_n))$, it follows that
\begin{equation}\label{taud1}
-L_V T_k(u_n)+  f_n(T_k(u_n))\geq  -\chi_{[u_n> k]}|V| u_n +  \chi_{[u_n\leq k]} \tau_d \quad \text{in } \Omega.
\end{equation}
Let $\gz\in C_c^\infty(\Gw)$ and $\gz\geq 0$. Since $0\leq u_n\leq u_1$,  \eqref{taud1} yields
\begin{equation*}\label{taud2}\BAL
&-\int_\Gw T_k(u_n)L_V\gz\,dx + \int_\Gw f_n(T_k(u_n))\gz \,dx\geq \\  
&-\int_\Gw \chi_{[u_1> k]}|V| u_1\gz\,dx  + \int_\Gw \chi_{[u_1\leq k]}\gz\, d\tau_d.
\EAL\end{equation*} 
By the dominated convergence theorem, letting $n\to\infty$, we obtain
\begin{equation*}\label{taud3}\BAL
&-\int_\Gw T_k(u^\#)L_V\gz\,dx + \int_\Gw f(T_k(u^\#))\gz \,dx\geq \\ 
&-\int_\Gw \chi_{[u_1> k]}|V| u_1\gz\,dx  +  \int_\Gw \chi_{[u_1\leq k]}\gz\, d\tau_d.
\EAL\end{equation*}
Finally, letting $k\to \infty$, we obtain
$$
-\int_\Gw u^\# L_V\gz\,dx + \int_\Gw f(u^\#)\gz \,dx\geq \int_\Gw \gz\, d\tau_d.
$$
In view of \eqref{taud0} this implies $\tau^\#\geq \tau_d$ and therefore $(\tau^\#)_d\geq \tau_d$. As $\tau^\#\leq \tau$ we obtain $(\tau^\#)_d = \tau_d$. This proves
\eqref{taud}.
\end{proof}

%

\begin{theorem}\label{t:reduced3}
Under the assumptions and with the notation of Theorem \ref{t:reduced2}, the following statements hold.

(i) For every $\tau$, $\nu^\#=\nu^*$ with $\nu^*$ as in Remark \ref{exp1}. Thus $\nu^\#$ does not depend on the data $\tau$.

(ii) For every $\nu$, $\tau^\#=\tau^*$ with $\tau^*$ as in Remark \ref{exp1}. Thus $\tau^\#$ does not depend on the boundary data $\nu$.

(iii) 
Let $(0,0) \prec (\gl,\sigma) \prec (\tau,\nu)$. Then problem
$$
-L_Vu +f(u)=\gl \quad \text{in } \Omega, \q \tr_V u=\gs,
$$
 has a solution if and only if $(\gl,\gs) \prec  (\tau^*,\nu^*)$.
\end{theorem}

\begin{proof} (i) Let $u_n(\tau,\nu)$, $u^\#(\tau,\nu)$ and $(\tau^\#, \nu^\#)$ be as in Theorem \ref{t:reduced2}. For simplicity, we write $u^*=u^\#(0,\nu)$ and $u^\# = u^\#(\tau,\nu)$. Since $0 \leq u_n(0,\nu)\leq u_n(\tau,\nu)$, it follows that $0 \leq u^* \leq u^\#$ in $\Omega$.  Therefore, using Theorem \ref{t:reduced2}(c), we obtain
\begin{equation}\label{tr-ineq1}
\nu^*= \tr_V u^* \leq \tr_V u^\# =\nu^\#\leq \nu.
\end{equation} 
Consequently $u^*$ is a subsolution of problem
\begin{equation} \label{eq0nu}
- L_V u + f(u) = 0 \quad \text{in } \Omega, \quad \tr_V u = \nu^\#.	
\end{equation}
Obviously $u^\#$ is a supersolution of \eqref{eq0nu}.
Hence, by Lemma \ref{exist-subsup}, there exists a unique solution $\bar v$ of problem \eqref{eq0nu} and $0 \leq u^*\leq \bar v \leq u^\#$. By Theorem \ref{t:reduced2}, $u^*$ is the largest solution of the equation $-L_Vu+f(u)=0$ in $\Omega$ with $L_V$ boundary trace $\leq \nu$. Thus $\bar v \leq u^*$ and consequently  $\nu^\#=\tr_V \bar v \leq \tr_V u^* = \nu^*$. This inequality and \eqref{tr-ineq1} imply $\nu^\#=\nu^*$.
\vskip 2mm

(ii) Here we denote by $R(\tau,\nu)$ the reduced couple of $(\tau, \nu)$. For given $\tau$ we denote by
$\tau^\#(\nu)$ the first component of $R(\tau,\nu)$.

In view of Theorem \ref {t:reduced2}(d), 
 $$R(\tau, \nu^\#)= R(\tau,\nu),\q \tau^\#(\nu)=\tau^\#(\nu^\#).$$
Hence, as $\nu^\#=\nu^*$,  
\begin{equation}\label{nu*1}
R(\tau,\nu)=R(\tau, \nu^*), \q u^\#(\tau,\nu)=u^\#(\tau, \nu^*).
\end{equation} 
Therefore, in this part of the proof we may assume that $\nu=\nu^*$.



\noindent\textbf{Step I.}\hskip 2mm If $\nu_1, \nu_2\in \GTM_+(\bdw)$ and $\nu_1\leq \nu_2$ then
\begin{equation}\label{nu1,2}
\tau^\#(\nu_1) \geq \tau^\#(\nu_2).
\end{equation}
\proof The assumption implies that $\nu_1^*\leq \nu_2^*$. Therefore by \eqref{nu*1}, we may assume that $\nu_1, \nu_2$ are good measures \wrto \eqref{bvp-tau}.

 Let $\gl$ be a measure \sth $0\leq\gl\leq\tau$ and suppose that there exists a solution $u_2$ of
$$ -Lu+f(u)=\gl \quad \text{in } \Omega,  \q    \tr_V u=\nu_2. 
$$
Then $u_2$ is a supersolution of
\begin{equation}  \label{a-2}    -Lu+f(u)=\gl \quad \text{in } \Omega,  \q    \tr_V u=\nu_1. 
\end{equation}

We also know that there exists $\gl^\#\leq \gl$ \sth the following problem has a solution $u_1$: 
$$         -Lu+f(u)=\gl^\# \quad \text{in } \Omega,  \q    \tr_V u=\nu_1. 
$$
The function $u_1$ is a subsolution of \eqref{a-2} and, by Lemma \ref{sub-sup}, $u_1 \leq  u_2.$ 
Hence there exists a solution $\bar v$ of \eqref{a-2}. Clearly, $\bar v$ is a subsolution of 
$$-L_V u + f(u)=\tau \quad \text{in } \Omega,  \q \tr_Vu=\nu_1$$
and therefore, by Theorem \ref{t:reduced2} (d), 
$\gl\leq \tau^\#(\nu_1)$. 
As $\gl= \tau^\#(\nu_2)$ satisfies the conditions required above, this implies \eqref{nu1,2}.

\vskip 2mm

\noindent\textbf{Step II.}\hskip 2mm  
With $\nu^*$ as before we prove that
\begin{equation}\label{temp4.4}
\tau^\#(\nu^*)=  \tau^\#(0).
\end{equation}

Let $u^\#_0$ be the solution of 
$$-L_V u+ f(u)=\tau^\#(0) \quad \text{in } \Omega,\q \tr_V u=0.$$
Then $u_0^\#$ is a subsolution of \eqref{bvp-tau}. By Theorem \ref{t:reduced2}(d) and \eqref{nu*1}, 
$$u_0^\#\leq u^\#(\tau,\nu) = u^\#(\tau,\nu^*).$$

Let $w :=  u^\#(\tau,\nu^*)-u_0^\#$ so that
$$-L_Vw + f(u^\#(\tau,\nu^*)) - f(u_0^\#)= \tau^\#(\nu^*) - \tau^\#(0) \quad \text{in } \Omega.$$
As $w\geq 0$, by the inverse maximum principle  \cite{Dup-Pon},
$$-(\Gd w )_c = (\tau^\#(\nu^*) - \tau^\#(0))_c\geq 0.$$
Moreover, by Lemma \ref {tau*d},
$$\tau^\#(\nu^*)_d =\tau^\#(0)_d = \tau_d.$$
Hence 
$\tau^\#(\nu^*) \geq \tau^\#(0).$ On the other hand, by step I, $\tau^\#(\nu^*) \leq \tau^\#(0).$ This proves  \eqref{temp4.4}.

(iii) This is a simple consequence of statements (i) and (ii) and Theorem \ref{t:reduced2}(d).
\end{proof}

\remark\label{neg-m} Given a real function $h$ on $\BBR$, denote by $\hat h$ the function given by
$$\hat h(t):=-h(-t) \forevery t\in \BBR.$$

\vskip 2mm

Let $f_n$, $\tau, \nu$ be as in  Theorem \ref{t:reduced2}. If $w_n$ is the solution of the \bvp
$$
-L_V w + \hat f_n(w) = \tau \quad \text{in } \Omega,\q  \tr_V w =\nu,
$$
then $z_n=-w_n$ satisfies 
$$
-L_V z_n + f_n(z_n) = -\tau \quad \text{in } \Omega,\q  \tr_V z_n = -\nu.
$$
Since $\hat f_n$ has the same properties as $f_n$, the sequence $\{w_n\}$ has the same properties as the sequence $\{u_n\}$ in Theorem \ref{t:reduced2}. 

Accordingly, for $(\tau,\nu)\in \GTM_+(\Gw;\Phi_V)\ti \GTM_+(\bdw)$ the reduced measures for $-\tau$ and $-\nu$ and the corresponding reduced couple are given by
\begin{equation}\label{negative-m}\BAL
 (-\tau)^\#:= -(\tau^\#_{\hat f}), &\q (-\nu)^\#:= -(\nu^\#_{\hat f}), \\
(-\tau,-\nu)^\# := -((\tau,&\nu)^\#_{\hat f}) = ((-\tau)^\#,(-\nu)^\#).
\EAL\end{equation}
(The subscript $\hat f$ above indicates that the reduced measure or couple is defined relative to this function.)

In this case $(-\tau)^\#$ and $(-\nu)^\#$ are the smallest good measures dominating $-\tau$ and $-\nu$ respectively.
The relation between the solutions corresponding to the reduced couples is given below
$$
u^\#(-\tau,-\nu)=-u^\#_{\hat f}(\tau,\nu).
$$
\vskip 1mm
Here again the notation $u^\#_{\hat f}$ indicates that the reduced couple and the corresponding solution is defined relative to $\hat f$. 

In view of this remark, \textit{the results of Theorems \ref{t:reduced2} and  \ref{t:reduced3}, with obvious modifications, also apply to couples of negative measures}.

\section{Signed measures}
\begin{theorem} \label{signed-reduced}
	Let $\tau \in \GTM(\Omega;\Phi_V)$ and $\nu \in \GTM(\partial \Omega)$.
Let	$\tau_1, \tau_2 \in \GTM_+(\Omega;\Phi_V)$ and $\nu_1, \nu_2 \in \GTM_+(\partial \Omega)$ be measures \sth
	\begin{equation}\label{tau12}
-\tau_1 \leq \tau \leq \tau_2, \q -\nu_1 \leq \nu \leq \nu_2.
	\end{equation} 
	
	Let $\{f_n\}$ be a sequence of functions as in Theorem \ref{t:reduced2}. Then the \bvp \eqref{bvpn'} has a unique solution $u_n= u_n(\tau,\nu)$.
	
	The following statements hold.
	
	(a) The sequence $\{u_n\} $ is  bounded in $W^{1,p}_{\rm loc}(\Gw)$ for every $p \in [1,\frac{N}{N-1})$. Consequently every subsequence has a limit point in the sense of convergence in $L^p_{loc}(\Gw)$ and convergence a.e. in $\Gw$.
	
	If $\{u_{n_k}\}$ is a subsequence of $\{u_n\}$ converging to  $\tl u$ a.e. then:
		\begin{equation}\label{un-conv}\BAL
(i) &\q	u_{n_k} \to \tl u  \qtxt{in } L^1(\Gw;\Phi_V/\gd), \\
(ii) &\q  f_{n_k}(u_{n_k})\to f(\tl u) \qtxt{a.e. in } \Omega.
\EAL	\end{equation}

	
	(b) The following inequality holds
	\begin{equation} \label{estuf(u)}
	\| \tilde u \|_{L^1(\Omega;\Phi_V/\delta)} + \| f(\tilde u) \|_{L^1(\Omega;\Phi_V)} \leq C \sum_{i=1,2} (\| \tau_i  \|_{\GTM(\Omega;\Phi_V)} + \|\nu_i \|_{\GTM(\partial \Omega)}).
	\end{equation}
	
	(c) There exist $\tilde \tau \in \GTM(\Omega;\Phi_V)$ and $\tilde \nu \in \GTM(\partial \Omega)$ such that
	\begin{equation}\label{tl_tau,nu}
(-\tau_1)^\# \leq \tilde \tau \leq \tau_2^\#, \q (-\nu_1)^\# \leq \tilde \nu \leq \nu_2^\#
	\end{equation} 
	 and 
	$$
	\tilde u + \BBG_V[f(\tilde u)] = \BBG_V[\tilde \tau] + \BBK_V[\tilde \nu] \quad \text{in } \Omega.
	$$
	Thus $\tilde u$ is the solution of the boundary value problem
\begin{equation} \label{bvp_tlu}
	-L_V \tilde u + f(\tilde u)=\tilde \tau \quad \text{in } \Omega,\quad 
	\tr_V \tilde u =\tilde \nu.
	\end{equation}
\end{theorem}
\begin{proof}
Let $v_{2,n}$  denote the solution of the \bvp
\begin{equation}\label{sol_2n}
-L_V v + f_n(v)=\tau_2 \quad \text{in } \Omega, \q \tr_V v=\nu_2,
\end{equation}
and $v_{1,n}$  denote the solution of the \bvp
\begin{equation}\label{sol_1n}
-L_V v + f_n(v)=-\tau_1 \quad \text{in } \Omega, \q \tr_V v=-\nu_1.
\end{equation}		

By Theorem \ref{t:reduced2} and Remark \ref{neg-m}, $v_{2,n}$ and $-v_{1,n}$ are positive and the following inequalities hold
\begin{equation} \label{eq:wn}\BAL
 \| v_{i,n} \|_{L^1(\Omega;\Phi_V/\delta)} + &\| f_n(v_{i,n}) \|_{L^1(\Omega;\Phi_V)}\\
  \leq & C(\| \tau_i \|_{\GTM(\Omega;\Phi_V)} + \| \nu_i \|_{\GTM(\partial \Omega)}),\q i=1,2.
\EAL\end{equation}
Moreover,
\begin{equation*}\BAL
v_{1,n} \uparrow v_1^\#(-\tau,-\nu)=: w_1,&  \q v_{2,n} \downarrow v_2^\#(\tau,\nu)=: w_2 \q \text{a.e. in } \Omega,\\
 f(v_{i}^\#) &\in L^1(\Omega;\Phi_V), \quad i=1,2,
\EAL\end{equation*} 
and there exist
$$ \tau_2^\#, (-\tau_1)^\# \in \GTM(\Omega;\Phi_V), \q \nu_2^\#, (- \nu_1)^\# \in \GTM(\partial \Omega)$$ such that
\begin{equation} \label{eq:vw}\BAL
-L_V w_2 + f(w_2 )&= \tau_2^\# \quad \text{in $\Gw$}, \q &&
\tr_V  w_2 =\nu_2^\#, \\
-L_V w_1 +  f (w_1)&= (-\tau_1)^\# \quad \text{in $\Gw$},
 \q &&\tr_V  w_1 =(-\nu_1)^\#.
\EAL\end{equation}

The monotone convergence of the sequences $\{v_{i,n}\}$ and \eqref{eq:wn} imply
\begin{equation}\label{to_vw}
v_{i,n}\to w_i \qtxt{in }\; L^1(\Gw;\Phi_V/\gd)
\end{equation}
and, by Dini's lemma,
\begin{equation}\label{fn_vw}
f_n(v_{i,n})\to f(w_i) \qtxt{a.e. in } \Omega, \q i=1,2.
\end{equation}
\vskip 2mm
	
(a)	By \eqref{bvpn'} and \eqref{sol_2n}, we have
	\begin{equation}\label{v2n}\BAL
	-L_V(u_n-v_{2,n}) + f_n(u_n)- f_n(v_{2,n})&=\tau-\tau_2 \quad \text{in } \Omega\\
	\tr_V(u_n-v_{2,n})&=\nu-\nu_2.
\EAL	\end{equation}
	By \eqref{tau12}, $\tau-\tau_2\leq 0$ and $\nu-\nu_2\leq 0$. Therefore, using Kato's inequality, we obtain $-L_V (u_n-v_{2,n})_+\leq 0$ and, by Lemma \ref{tr+},  $\tr_V (u_n-v_{2,n})_+=0$. By Lemma \ref{subhar=0}, $(u_n-v_{2,n})_+ =0$, which implies
	\begin{equation}\label{un<}
	u_n\leq v_{2,n} \quad \text{in } \Omega.
	\end{equation}
	
Similarly, by \eqref{bvpn'} and \eqref{sol_1n}, we obtain
	\begin{equation}\label{un>}
v_{1,n}\leq u_n \quad \text{in } \Omega.
	\end{equation}
	(Recall that $v_{1,n}\leq 0$.)
	Thus
	\begin{equation}\label{|u_n|}
	|u_n| \leq \max\{-v_{1,n}, v_{2,n}\} \quad \text{in } \Omega.
	\end{equation} 
This inequality and the monotonicity of $f_n$ imply that
\begin{equation}\label{f|u_n|}
|f_n(u_n)| \leq \max\{f_n(-v_{1,n}), f_n(v_{2,n})\} \quad \text{in } \Omega.
\end{equation}
 Inequalities \eqref{|u_n|}, \eqref {f|u_n|} and \eqref{eq:wn} together with the fact that $V$ is locally bounded in $\Gw$ imply that $\{|V u_n + f_n(u_n)|\}$ is  bounded in $L^1_{\rm loc}(D)$. Consequently $\{u_n\} $ is  bounded in $W^{1,p}_{\rm loc}(\Gw)$ for every $p \in [1,\frac{N}{N-1})$. Therefore $\{u_n\}$ is precompact in  $L^p_{\rm loc}(\Gw)$. Hence there exists a subsequence $\{u_{n_k}\}$) that converges to a function $\tilde u$ in $L_{\rm loc}^p(\Omega)$ and a.e. in $\Omega$. 
 
	By the generalized dominated convergence theorem, the convergence of $\{u_{n_k}\}$ to $\tl u$ a.e. in $\Omega$,  \eqref{|u_n|} and  \eqref{to_vw} imply 
	\eqref{un-conv} (i).
	
	The assumption $f_n^{\pm} \uparrow f^\pm$ and Dini's lemma imply \eqref{un-conv} (ii).
\vskip 2mm

To simplify the presentation, in the remainder of the proof \textit{we
assume that $\{u_n\}$ is a sequence converging a.e. to $\tl u$}. 
\vskip 2mm

	(b) By \eqref{|u_n|}, \eqref{eq:wn} and \eqref{un-conv}(i), we obtain
	$$ \BAL
	\| \tilde u \|_{L^1(\Omega;\Phi_V/\delta)} &= \lim_{n \to \infty} \| u_n \|_{L^1(\Omega;\Phi_V/\delta)}\\
	&\leq C \sum_{i=1,2} (\| \tau_i  \|_{\GTM(\Omega;\Phi_V)} + \|\nu_i \|_{\GTM(\partial \Omega)}).
	\EAL $$

	By Fatou's lemma, \eqref{f|u_n|}, \eqref{eq:wn} and \eqref{un-conv} yield
	$$
	\BAL
	\| f(\tilde u) \|_{L^1(\Omega;\Phi_V)} &\leq \lim_{n \to \infty}\| f_n(u_n) \|_{L^1(\Omega;\Phi_V)} \\
	& \leq \limsup_{n \to \infty}\| \max\{f_n(-v_{1,n}),f_n(v_{2,n})\} \|_{L^1(\Omega;\Phi_V)} \\
	&\leq C \sum_{i=1,2} (\| \tau_i  \|_{\GTM(\Omega;\Phi_V)} + \|\nu_i \|_{\GTM(\partial \Omega)}).
	\EAL
	$$
This proves \eqref{estuf(u)}.
	\vskip 2mm
	
	(c) Inequalities \eqref{un<} and  \eqref{un>} imply $w_1 \leq \tilde u \leq w_2$ a.e. in $\Omega$.

Let $\gz\in C_c^\infty(\Gw)$ and $\gz\geq 0$.  By \eqref{v2n},
\begin{equation}\label{temp6.1}
 -\int_\Gw (v_{2,n}-u_n)L_V\gz \,dx + \int_\Gw (f_n(v_{2,n})-f_n(u_n))\gz \,dx =\int_\Gw \gz  \,d(\tau_2-\tau).
\end{equation}
By \eqref{un<}, \eqref{fn_vw} and Fatou's lemma,
\[ \int_\Gw (f(w_2)- f(\tl u))\gz\,dx  \leq \liminf_{n \to \infty} \int_\Gw (f_n(v_{2,n})-f_n(u_n))\gz \,dx.    \]
Therefore, by \eqref{temp6.1} and \eqref{to_vw},
\begin{equation}\label{temp6.1+}
-\int_\Gw (w_2- \tl u) L_V\gz dx +   \int_\Gw (f(w_2)- f(\tl u))\gz\,dx\leq  \int_\Gw \gz  d(\tau_2-\tau).
\end{equation}

By \eqref{temp6.1}, we have
$$
\left|\int_\Gw (v_{2,n}-u_n)L_V\gz dx\right|\leq  \int_\Gw (f_n(v_{2,n})-f_n(u_n))\gz \,dx +\int_\Gw \gz  d(\tau_2-\tau).
$$
By \eqref{eq:wn}, \eqref{|u_n|} and  \eqref{f|u_n|},
$$\BAL 0 &\leq \int_\Gw (f_n(v_{2,n})-f_n(u_n))\gz \,dx +\int_\Gw \gz  d(\tau_2-\tau)\\
&\leq c \sup(\gz/\Phi_V) \big( \sum_{i=1,2}(\norm{\tau_i}_{\GTM(\Gw;\Phi_V)} +  \norm{\nu_i}_{\GTM(\bdw)} + \norm{\tau}_{\GTM(\Gw;\Phi_V)} \big )\\
&\leq  C\sup (\gz/\Phi_V).
\EAL$$
Since 
$$ \lim_{n \to \infty}\int_\Gw (v_{2,n}-u_n)L_V\gz dx = \int_\Gw (w_2-\tl u)L_V\gz dx,$$
it follows that 
\begin{equation*}
\left|\int_\Gw (w_2-\tl u)L_V\gz dx \right| \leq C\sup (\gz/\Phi_V).
\end{equation*}

Hence, by \eqref{temp6.1+}, there exists
a measure $\gl\leq \tau_2-\tau$ \sth  $\gl \in \GTM(\Gw;\Phi_V)$ and 
$$ -\int_\Gw (w_2- \tl u) L_V\gz \,dx +   \int_\Gw (f(w_2)- f(\tl u))\gz\,dx =\int_\Gw\gz \,d\gl \forevery \gz\in C_c^\infty(\Gw) $$
or equivalently,
\begin{equation}\label{gl}
-L_V  (w_2- \tl u) + f(w_2) -f( \tl u)=\gl \quad \text{in } \Omega.
\end{equation}
Consequently, by \eqref{eq:vw},
\begin{equation}\label{temp6.2}
-L_V \tl u +f(\tl u)=\tl \tau \quad \text{in } \Omega \qtxt{where }\; \tl\tau:= \tau_2^\# -\gl\geq \tau_2^\# - \tau_2 +\tau.  
\end{equation}
\vskip 2mm

Next, by \eqref{bvpn'} and \eqref{sol_1n}, for $\gz\in C_c^\infty(\Gw)$,
$$
-\int_\Gw (u_n-v_{1,n})L_V\gz \,dx + \int_\Gw (f_n(u_n) - f_n(v_{1,n}))\gz \,dx =\int_\Gw \gz  \,d(\tau_1+\tau).
$$
By the same argument as above it follows that there exists a measure $\gl'\leq \tau_1+\tau$ \sth $\gl'\in \GTM(\Gw;\Phi_V)$ and
\begin{equation}\label{gl'}
-L_V  (\tl u-w_1) + f( \tl u) - f(w_1) =\gl' \quad \text{in } \Omega.
\end{equation}
Consequently, by \eqref{eq:vw},
\begin{equation}\label{temp6.2'}
\tl\tau = \gl' + (- \tau_1)^\#\leq \tau_1+\tau + (- \tau_1)^\#.
\end{equation}
\vskip 2mm

Next we show that $\tl\tau$ satisfies \eqref{tl_tau,nu}.
By Lemma \ref{tau*d},
\begin{equation}\label{taui_d}
(\tau_2^\#)_d = (\tau_2)_d, \q ((-\tau_1)^\#)_d = -(\tau_1)_d.
\end{equation}
 Therefore, by \eqref{temp6.2}, $\tl\tau_d\geq \tau_d$ and by  \eqref{temp6.2'}, $\tl\tau_d\leq \tau_d$. Thus
\begin{equation}\label{td=tltd}
\tau_d=\tl\tau_d.
\end{equation}

By \eqref{gl} and \eqref{temp6.2},
$$
 -L_V  (w_2- \tl u) + f(w_2) -f( \tl u)= \tau_2^\# - \tl \tau \quad \text{in } \Omega. 
$$
Since $\tl u$ and $w_2$ are diffuse and $f(0)=0$, it follows that 
\begin{equation}\label{temp6.4}
(-\Gd(w_2-\tl u))_c= (\tau_2^\# - \tl \tau)_c.
\end{equation}
As $w_2-\tl u\geq 0$, by the inverse maximum principle \cite{Dup-Pon},  $(-\Gd(w_2-\tl u))_c\geq 0$.
 Consequently,
$$
(\tau_2^\# - \tl \tau)_c \geq 0.
$$
As $\tau\leq \tau_2$, \eqref{taui_d}, \eqref{td=tltd} yield, 
$$(\tau_2^\#)_d=(\tau_2)_d \geq \tau_d=\tl\tau_d.$$
This inequality and \eqref{temp6.4} imply
\begin{equation}\label{tlt<}
\tl\tau\leq \tau_2^\#.
\end{equation}

Similarly by \eqref{gl'} and \eqref{temp6.2'},
$$ -L_V  (\tl u-w_1) + f( \tl u) - f(w_1) = \tl \tau - (-\tau_1)^\# \quad \text{in } \Omega.$$
Since $\tl u-w_1\geq 0$, another application of the inverse maximum principle yields
$(-\Gd (\tl u-w_1))_c\geq0$ and consequently
\begin{equation}\label{temp6.5'}
(\tl \tau- (-\tau_1)^\#)_c \geq 0.
\end{equation}
As $-\tau_1\leq \tau$,  \eqref{taui_d}, \eqref{td=tltd} imply
$$\tl\tau_d=\tau_d\geq (-\tau_1)_d= ((-\tau_1)^\#)_d.$$ 
This and \eqref{temp6.5'} yield
$$
\tl\tau\geq (-\tau_1)^\#.
$$
Finally, this and \eqref{tlt<} imply \eqref{tl_tau,nu} \wrto $\tl\tau$. 
\vskip 2mm

It remains to show that $\tl u$ has an $L_V$ \btr and that the second inequality in \eqref{tl_tau,nu} holds.

By \eqref{gl} $-L_V(w_2-\tl u)=\mu $ where $\mu\in \GTM(\Gw;\Phi_V)$. Since $w_2 -\tl u \geq 0$, by Lemma \ref{exist_tr_pos}, $w_2 -\tl u$ has an $L_V$ \btr, say  $\gs$, and $w_2-\tl u= \BBG_V[\mu]+ \BBK_V[\gs]$. Obviously $\gs\geq 0$.
Therefore 
$$\tr_V \tl u=\tr_V w_2 -\gs \leq \tr_V w_2 = \nu_2^\#.$$ 

Similarly, starting with \eqref{gl'} we conclude that  there exists
$\gs' \in \GTM(\bdw)$ \sth $\tr_V (\tl u- w_1)=\gs'\geq 0$.
Therefore $$\tr_V \tl u =\gs' + \tr_V w_1 \geq \tr_V w_1 = (-\nu_1)^\#.$$
This completes the proof. 
\end{proof} 

The theorem is complemented by the following consequence of \cite[Corollary 3.7]{MM-note} (see Lemma \ref{exist-subsup}).
\begin{proposition}\label{g.couple}
Let $(\gl_i,\gs_i) \in \GTM(\Gw;\Phi_V) \ti \GTM(\bdw)$, $i=1,2$. Suppose that these are good couples \wrto \eqref{bvp-gen} and that $(\gl_1,\gs_1)\prec (\gl_2,\gs_2)$. Then, every couple $(\gl,\gs)$ \sth
\begin{equation}\label{g.interval}
(\gl_1,\gs_1)\prec (\gl,\gs)\prec (\gl_2,\gs_2)
\end{equation}
is a good couple.
\end{proposition}

\proof Let $v_i$ be the solution corresponding to the couple $(\gl_i,\gs_i)$, $i=1,2$ and let $(\gl,\gs)$ be as in \eqref{g.interval}. Then $v_2$ is a supersolution and $v_1$ a subsolution of equation $-L_V u +f(u) = \gl$ and $\tr_V v_1 \leq \gs\leq \tr_V v_2$.
Therefore the stated result is a consequence of Lemma \ref{exist-subsup}).
\qed \\ 

\noindent\textbf{Remark.} As mentioned before, Theorem \ref{signed-reduced} and Proposition \ref{g.couple} imply Theorem \ref{th:main2}. However we emphasize that, in contrast to Theorem \ref{signed-reduced}, in   Proposition \ref{g.couple} $(\gl_i,\gs_i)$ may be couples of signed measures.\\

\begin{proposition} \label{goodcouple1}
In addition to the assumptions of Theorem \ref{signed-reduced}, assume that $(\tau_2,\nu_2)$ and $(-\tau_1, -\nu_1)$ are good couples. 

Suppose that $(-\tau_1, -\nu_1)\prec (\tau,\nu) \prec (\tau_2,\nu_2)$.
 (By the previous result, $(\tau,\nu)$ is a good couple.) Let $u$ be the solution of problem \eqref{bvp-f} and
let $u_n$ denote the solution of the `approximating' problem
$$-L_V u+f_n( u)= \tau \qtxt{in } \Gw, \q \tr_Vu=\nu.$$

If $\Phi_V$ satisfies the additional condition
\begin{equation} \label{C2}  \int_{\Gs_\gb} \Phi_V^2/\gd\; dS\to 0 \qtxt{as }\gb\to 0 
\end{equation}
 then $u_n\to u$, i.e. $\tl u=u$.                

	
\end{proposition}

\begin{proof} We use the notation in the proof of Theorem \ref{signed-reduced}.
	
		Let $v_{1,n}$ and $v_{2,n}$ be the solutions of \eqref{sol_1n} and \eqref{sol_2n}. Then
	\begin{equation}\label{un,vn}
	v_{1,n}\leq u_n\leq v_{2,n} \quad \text{in } \Omega.
	\end{equation} 
	The sequences $\{v_{i,n}\}$, $i=1,2$ satisfy \eqref{to_vw} and \eqref{fn_vw}. In addition, by Proposition \ref{fntof} (see Appendix), 
	 \begin{equation}
	f_n(v_{i,n}) \to f(w_i) \qtxt{in }\; L^1(\Gw;\Phi_V).
	 \end{equation}

	 By \eqref{un,vn} and the monotonicity of $f_n$,
	 $$f_n(v_{1,n}) \leq f_n(u_n) \leq f_n(v_{2,n}) \quad \text{in } \Omega.$$
	 Therefore, taking a subsequence for which \eqref {un-conv} holds, the (generalized) dominated convergence theorem implies $f_n(u_n) \to f(\tl u)$ in $L^1(\Gw;\Phi_V)$ as $n \to \infty$.
Hence, by Theorem \ref	{G-I}, 
$$ \BBG_V[f_n(u_n)] \to \BBG_V[f(\tl u)] \qtxt{in }\; L^1(\Gw;\Phi_V/\gd). $$
By \eqref {un-conv}, $u_n\to \tl u$ in $L^1(\Gw;\Phi_V/\gd)$. As
$$u_n+ \BBG_V[f_n(u_n)] = \BBG_V[\tau] + \BBK_V[\nu] \quad \text{in } \Omega$$
we conclude that
$$\tl u + \BBG_V[f(\tl u)] = \BBG_V[\tau] + \BBK_V[\nu] \quad \text{in } \Omega.$$
Thus $\tl u$ is a solution of \eqref{bvp-f}. By uniqueness (Lemma \ref{sub-sup}), $\tl u =u$. 
\end{proof}


\begin{proof}[\textbf{Proof of Theorem \ref{th:main3}}]\hskip 3mm
Since $f$ vanishes on $(-\infty,0]$, if $w$ is a  real function on $\Gw$ then
\begin{equation}\label{fw+}
f(w)=f(w_+)+f(-w_-) =f(w_+).
\end{equation}

Suppose that $(\tau, \nu)$ is a good couple, i.e. \eqref{bvp-tau} has a solution $u$ \sth $f(u)\in L^1(\Gw;\Phi_V)$. 

Let $u_n$ and $\tl u$ be as in Theorem \ref{signed-reduced}. In view of \eqref{fw+}, $\{u_n\}$ is decreasing (by Lemma \ref{sub-sup}) and $\tl u=\lim u_n$.

Let $w$ be a subsolution of \eqref{bvp-tau} \sth $f(w)\in L^1(\Gw;\Phi_V)$. Then
$$-L_Vw + f_n(w) \leq -L_Vw + f(w_+) \leq \tau \quad \text{in } \Omega,\q \tr_V w\leq \nu.$$
As $u_n$ satisfies \eqref{bvpn'}, Lemma \ref{sub-sup} implies that
 $w\leq u_n$. Thus $w\leq \tl u$ and, in particular,
\begin{equation}\label {u<tlu}
u\leq \tl u \quad \text{in } \Omega.
\end{equation}

Let $v:=\tl u-u$. Then $v\geq 0$, $\Gd v$ is a measure and therefore, by the inverse maximum principle, 
$$(-\Gd v)_c = (\tl\tau -\tau)_c\geq 0.$$
As in the proof of  Theorem \ref{signed-reduced} (see \eqref{td=tltd}) $\tau_d=\tl\tau_d$. Therefore
$$
\tau\leq \tl\tau.
$$
In addition, by \eqref{u<tlu},
$$
\nu=\tr_Vu \leq  \tr_V\tl u=\tilde \nu.
$$
Thus
\begin{equation}\label{tau_prec}
 (\tau,\nu) \prec (\tl \tau, \tl \nu).
\end{equation}

If 
$(\tau_1,\nu_1):= (\tau_-,\nu_-)$ and $(\tau_2,\nu_2):= (\tau_+, \nu_+)$ then
 $\tau,\nu$ satisfy \eqref{tau12} and by  Theorem \ref{signed-reduced}(c),
\begin{equation}\label{tltau_prec}
(\tl\tau, \tl\nu) \prec (\tau_+,\nu_+)^\#.
\end{equation}
Hence, by \eqref{tau_prec}, we obtain \eqref{good_couple}.

  Conversely, assume that $(\tau,\nu)$ satisfies \eqref{good_couple}. Recall that every couple of negative measures is good relative to $f$. Therefore, by Proposition \ref{g.couple}, the relation
$$-(\tau_-,\nu_-) \prec (\tau, \nu) \prec (\tau_+,\nu_+)^\#$$
implies that $(\tau,\nu)$ is a good couple.

The last assertion of the theorem is obvious. 
\end{proof}

\vspace{0.3cm}

\noindent\textbf{Acknowledgement.}\hskip 2mm The research of M. Bhakta is partially supported by the DST Swarnajaynti fellowship (SB/SJF/2021-22/09). The research of P.-T. Nguyen was supported by Czech Science Foundation, Project GA22-17403S.

\vspace{0.3cm}

\noindent\textbf{Conflict of interest.} The authors declare that they have no conflicts of interest to this work.

\appendix \section{~} \label{Appendix}
\setcounter{equation}{0}
\renewcommand{\theequation}{a.\arabic{equation}}

We prove an auxilliary result that is used in the proof of Proposition \ref{goodcouple1}.


\begin{proposition} \label{fntof} Assume \eqref{A1}, \eqref{A2}, \eqref{C1} and \eqref{C2}  hold.
Suppose that $(\tau,\nu)\in  \GTM_+(\Omega;\Phi_V)\ti \GTM_+(\partial \Omega)$ is a good couple of measures. Let $u$ be the corresponding solution of problem \eqref{bvp-tau} and $u_n$ be the solution of \eqref{bvpn'}. Then 
$$u_n \to u \qtxt{in }\;L^1(\Omega;\Phi_V/\delta), \q f_n(u_n) \to f(u) \qtxt{in }\;L^1(\Omega;\Phi_V).$$
\end{proposition}

The proof is based on the following lemma that was established in \cite{MN2} for a more restricted class of potentials. 

\begin{lemma}\label{tau-PhiV} Assume \eqref{A1}, \eqref{A2}, \eqref{C1} and \eqref{C2}  hold.
	Let $\tau\in \GTM(\Gw;\Phi_V)$ and let $\gl_V$ be the eigenvalue of $-L_V$ corresponding to $\Phi_V$. Then
	\begin{equation}\label{tau-vgf1}
	\lambda_V \int_{\Omega} \BBG_{V}[\tau] \Phi_{V}\,dx = -\int_{\Omega} \BBG_{V}[\tau] L_V \Phi_V \,dx =\int_{\Omega} \Phi_{V}\,d\tau.
	\end{equation}
\end{lemma}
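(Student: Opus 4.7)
The first equality is the immediate rewriting of $-L_V\Phi_V=\lambda_V\Phi_V$, so the content lies in the identity $\int_\Gw\BBG_V[\tau]\Phi_V\,dx=\lambda_V^{-1}\int_\Gw\Phi_V\,d\tau$. My plan is to prove it by Fubini combined with the representation $\BBG_V[\Phi_V]=\Phi_V/\lambda_V$.

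Splitting $\tau=\tau_+-\tau_-$ I may assume $\tau\geq 0$. Theorem \ref{G-I}(ii) together with $\gd\leq\diam\Gw$ gives
$$\int_\Gw\Phi_V\,\BBG_V[\tau]\,dx\leq(\diam\Gw)\int_\Gw\tfrac{\Phi_V}{\gd}\,\BBG_V[\tau]\,dx\leq c'(\diam\Gw)\int_\Gw\Phi_V\,d\tau<\infty,$$
so the double integral $\iint_{\Gw\ti\Gw}G_V(x,y)\Phi_V(x)\,dx\,d\tau(y)$ converges absolutely. The symmetry of $G_V$ and Fubini then produce
$$\int_\Gw\BBG_V[\tau]\Phi_V\,dx=\int_\Gw\BBG_V[\Phi_V](y)\,d\tau(y).$$

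It remains to verify $\BBG_V[\Phi_V]=\Phi_V/\lambda_V$. Since $-L_V\Phi_V=\lambda_V\Phi_V\geq 0$, the ground state $\Phi_V$ is positive and $L_V$-superharmonic, and the Riesz representation writes $\Phi_V=p+h^*$ with $p$ an $L_V$-potential and $h^*\geq 0$ an $L_V$-harmonic function. Because $-L_V p=-L_V\Phi_V=\lambda_V\Phi_V$ and $\Phi_V\in H^1_0(\Gw)\subset L^2(\Gw)\subset\GTM(\Gw;\Phi_V)$, we have $p=\lambda_V\BBG_V[\Phi_V]$. By \eqref{tr-prop}(ii), $\tr^*p=0$; and condition \eqref{B1}, inserted into the definition \eqref{ntr1} with $u=\Phi_V$ and $\nu=0$, immediately gives $\tr^*\Phi_V=0$. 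Hence $\tr^*h^*=0$, and writing $h^*=\BBK_V[\mu]$ by the Martin representation, property \eqref{tr-prop}(i) forces $\mu=0$, so $h^*\equiv 0$ and $\Phi_V=\lambda_V\BBG_V[\Phi_V]$. Substituting back into the Fubini identity produces the claim. The only technical point is the vanishing of $\tr^*\Phi_V$, which is precisely what condition (B1) is tailored for; once noted, the argument is a routine combination of Fubini with the Riesz decomposition of the ground state.
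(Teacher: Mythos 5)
Your argument is correct, but it follows a genuinely different route from the paper. The paper first disposes of the boundary strip using \cite[Theorem 3.4]{MM-pafa1} so as to reduce to compactly supported $\tau$, then applies Green's theorem on the subdomains $\{\gd>\gb\}$ and kills the two boundary integrals via interior elliptic estimates, Harnack's inequality and \cite[Corollary 3.5]{MM-pafa1}. You instead combine Tonelli--Fubini with the symmetry of $G_V$ to convert $\int_\Gw\BBG_V[\tau]\Phi_V\,dx$ into $\int_\Gw\BBG_V[\Phi_V]\,d\tau$, and then identify $\BBG_V[\Phi_V]=\Phi_V/\gl_V$ through the Riesz decomposition of the ground state, using (B1) to see that $\tr^*\Phi_V=0$ and Theorem \ref{Knu} to kill the harmonic part. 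Your route avoids all surface integrals and derivative estimates, at the price of invoking the symmetry $G_V(x,y)=G_V(y,x)$ — a standard fact for the self-adjoint operator $\Gd+V$, available from \cite{An88}, but one you should cite rather than use silently, since it is the load-bearing step of the Fubini argument. Two small points of hygiene: (a) the assertion $\Phi_V\in H^1_0(\Gw)$ is not actually among the paper's hypotheses and is not needed; what you require is $\Phi_V\,dx\in\GTM(\GPV)$, i.e.\ $\int_\Gw\Phi_V^2\,dx<\infty$, which follows from the boundedness of $\Phi_V$ guaranteed by (B2) together with interior continuity; (b) in concluding $h^*\equiv0$ it is cleanest to note $0\le h^*\le\Phi_V$, so $\int_{\Gs_\gb}\frac{\Phi_V}{\gd}h^*\,dS\le\int_{\Gs_\gb}\frac{\Phi_V^2}{\gd}\,dS\to0$ by (B1), and then the lower bound in \eqref{KV1} forces the Martin measure of $h^*$ to vanish. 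With these adjustments the proof is complete and arguably more elementary than the paper's, though less robust in settings where the symmetry of the Green kernel is unavailable.
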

\begin{proof}  By linearity, we may assume that $\tau \geq 0$. 
	For $\gb>0$, put
	$$I_\gb(\tau):=-\gl_V \int_{D_\gb}\BBG_{V}[\tau] \Phi_{V} \,dx + \int_{D_\gb} \Phi_{V}\,d\tau,$$
	where $D_\gb=\{x\in \Gw: \gd(x)>\gb\}$. To prove \eqref{tau-vgf1} we show that
	\begin{equation}\label{Igb=0}
	\lim_{\gb\to0} I_\gb(\tau)=0.
	\end{equation}		
		
		By Theorem \ref{G-I} (ii), 
	$$I_0(\tau)\leq C\int_\Gw \Phi_V d\tau.$$
	Given $\ge>0$ we choose $0<\gg$ sufficiently small so that, for $\tau_\gg:=\tau \chr{\Gw_{\gg}}$,
\begin{equation}\label{gb_gg}
I_0(\tau_\gg)<\ge.
\end{equation}

Therefore it is sufficient to prove 
$$\lim_{\gb\to 0} I_\gb(\tau-\tau_\gg)= 0.$$
Thus it is sufficient to prove \eqref{Igb=0} when $\tau$ has compact support in $\Gw$.


	Let $\gb_\tau=\rec{2} \dist(\supp\tau, \bdw)$ and let $\gb\in (0, \gb_\tau)$.
	Applying Green's theorem in $D_\gb$, we obtain
	$$ \begin{aligned} \gl_V&\int_{D_\gb}\BBG_{V}[\tau] \Phi_{V} \,dx= - \int_{D_\gb}\BBG_{V}[\tau] L_{V}\Phi_{V} \,dx  \\
	=&\int_{D_\gb}\Phi_{V} \, d\tau
	+\int_{\Sigma_\gb}\frac{\partial \BBG_{V}[\tau]}{\partial\mathbf n}\Phi_{V}\, dS(x)
	-\int_{\Sigma_\gb}\frac{\partial \Phi_{V}}{\partial \mathbf n}  \BBG_{V}[\tau]\, dS(x).
	\end{aligned} $$
	Thus
	\begin{equation}\label{tau-vgf3}
	I(\gb)= -\int_{\Sigma_\gb}\frac{\partial \BBG_{V}[\tau]}{\partial \mathbf n}\Phi_{V}\,dS(x)
	+\int_{\Sigma_\gb}\frac{\partial \Phi_{V}}{\partial \mathbf n}  \BBG_{V}[\tau]\,dS(x).
	\end{equation}
	
	Note that
	$$G_V(x,y) \sim \Phi_V(x) \forevery (x,y) \in \Gw_{\gb_\tau} \ti \supp \tau.$$
	 Therefore
	\begin{equation} \label{Gtausim} \BBG_V[\tau](x) = \int_{\Omega} G_V(x,y) \, d\tau(y) \sim \Phi_V(x), \quad \forall x \in \Sigma_\beta.
	\end{equation}
	By interior elliptic estimates, 
	for every $x\in \Gs_\gb$,
	$$\left |\frac{\partial \Phi_{V}}{\partial \mathbf n}(x) \right |\leq C \sup_{|\gx-x|<\gb/4}  \Phi_{V}(\gx)\gb^{-1}.$$ Therefore by Harnack's inequality, we deduce
	$$\left|  \frac{\partial \Phi_{V}}{\partial \mathbf n}(x) \right| \leq C \Phi_{V}(x)\gb^{-1} \quad \forall x\in \Sigma_\gb.$$
			Hence, by \eqref{Gtausim} and assumption \eqref{C2},
		$$
		\lim_{\gb\to 0}  \int_{\Sigma_\gb}\frac{\partial \Phi_{V}}{\partial\mathbf n}  \BBG_{V}[\tau] \, dS(x) =0.
		$$
	
	In $D_{\gb_\tau}$: $G_V[\tau]$ is $L_V$ harmonic and $G_V[\tau] \sim \Phi_V$. Therefore the same argument as above yields,	
	\begin{equation}\label{tau-vgf5}
	\lim_{\gb\to 0}  \int_{\Sigma_\gb}\frac{\partial \BBG_{V}[\tau]}{\partial\mathbf n}  \Phi_{V} \, dS(x) =0.
	\end{equation}
	Combining \eqref{tau-vgf3} -- \eqref{tau-vgf5}, we obtain \eqref{Igb=0} for measures $\tau$ with compact support. In view of previous remarks, this implies \eqref{Igb=0} for any  measure $\tau\in \GTM(\Gw;\Phi_V)$. This in turn implies \eqref{tau-vgf1}.
\end{proof}
\vskip 2mm

\begin{proof}[\textbf{Proof of Proposition \ref{fntof}}] \hskip 3mm
	By Theorem \ref{t:reduced2}(d), $u=u^\#$. From the proof of Theorem \ref{t:reduced2}, $u_n\geq 0$ satisfies \eqref{eq-un},  $u_n \downarrow u^\#=u$ and $f_n(u_n)\to f(u)$ a.e. in $\Omega$. By \eqref{eq-un}, $ u_n\leq \BBG_V[\tau]+ \BBK_V[\nu] \in L^1(\Omega;\Phi_V/\delta)$. Therefore, by the dominated convergence theorem, 
	$$u_n\to u \qtxt{in }\;L^1(\Omega;\Phi_V/\delta).$$
	
	
	By Lemma \ref{tau-PhiV} with $\tau$ replaced by $f_n(u_n)$ (recall that $f_n$ is a bounded function) we have,
	\begin{equation}\label{e:f_n}
	 \int_{\Omega} f_n(u_n)\Phi_V\,dx = \lambda_V \int_{\Omega} \BBG_V[f_n(u_n)]\Phi_V\, dx.
	 \end{equation}
	Since $u_n$ is the solution of \eqref{bvpn'} it satisfies $$u_n+\BBG_V[f_n(u_n)]= \BBG_V[\tau] + \BBK_V[\nu].$$
	Multiplying this equality by $\gl_V\Phi_V$ and using \eqref{e:f_n} we obtain
$$\BAL
&\int_{\Omega} f_n(u_n)\Phi_V\,dx = \lambda_V \int_{\Omega} \BBG_V[f_n(u_n)]\Phi_V\, dx \\
=-& \lambda_V \int_{\Omega} u_n \Phi_V\, dx  +  \lambda_V \int_{\Omega} \BBG_V[\tau] \Phi_V\, dx + \lambda_V \int_{\Omega} \BBK_V[\nu] \Phi_V\, dx.
\EAL $$
Hence, 
\begin{equation}\label{e:fn(un)}
	\BAL
&\lim_{n \to \infty}\int_{\Omega} f_n(u_n)\Phi_V \,dx =\\
&- \lambda_V \int_{\Omega} u \Phi_V \,dx  +  \lambda_V \int_{\Omega} \BBG_V[\tau] \Phi_V \,dx + \lambda_V \int_{\Omega} \BBK_V[\nu] \Phi_V \,dx.
\EAL\end{equation}

Since $u$ is the solution of \eqref{bvp-tau}, $f(u)\in L^1(\Gw;\Phi_V)$ and $u+\BBG_V[f(u)]= \BBG_V[\tau] + \BBK_V[\nu].$ In addition, by Lemma \ref{tau-PhiV},
$$  \int_{\Omega} f(u)\Phi_V\,dx = \lambda_V \int_{\Omega} \BBG_V[f(u)]\Phi_V\, dx.$$
 Therefore, as before,
\begin{equation}\label{e:f(u)}\BAL
& \int_{\Omega} f(u)\Phi_V\,dx\\
&=- \lambda_V \int_{\Omega} u \Phi_V\, dx  +  \lambda_V \int_{\Omega} \BBG_V[\tau] \Phi_V\, dx + \lambda_V \int_{\Omega} \BBK_V[\nu] \Phi_V\, dx.
\EAL\end{equation}

By \eqref{e:fn(un)} and \eqref{e:f(u)}, $\norm{f_n(u_n)}_{L^1(\Omega;\Phi_V)} \to \norm{f(u)}_{L^1(\Omega;\Phi_V)}$.
As $f_n(u_n) \to f(u)$ a.e. in $\Omega$, it follows that $$f_n(u_n) \to f(u) \qtxt{in }\; L^1(\Omega;\Phi_V).$$
The proof is complete.
\end{proof}

\end{document}